\documentclass[11pt,a4paper]{amsart}
\usepackage{hyperref}
\usepackage{graphicx} 
\usepackage{wasysym}
\usepackage[mathscr]{eucal}

\usepackage{amsmath}

\usepackage{amssymb}
\usepackage{amsthm}

\usepackage{amsfonts}
\usepackage{tikz}
\usepackage{enumerate}

\newtheorem{theorem}{Theorem}[section]
\newtheorem{lemma}[theorem]{Lemma}
\newtheorem{proposition}[theorem]{Proposition}
\newtheorem{corollary}[theorem]{Corollary}

\theoremstyle{definition}
\newtheorem{definition}[theorem]{Definition}
\newtheorem{example}[theorem]{Example}

\newtheorem{remark}[theorem]{Remark}

\begin{document}
	
	\title[Quasi-pseudometric modular spaces as $\mathscr{Q}$-categories]{Quasi-pseudometric modular spaces as $\mathscr{Q}$-categories}
	
	\author[C. L\'opez-Pastor]{C\'esar L\'opez-Pastor}
	\address[C. L\'opez-Pastor]{Departamento de Matemáticas, Universitat d'Alacant, Ctra. Sant Vicent del Raspeig, 03690 Alicante, Spain}
	\email{cesar.lopez@ua.es}
	
	\author[T. Pedraza]{Tatiana Pedraza}
	\address[T. Pedraza]{Instituto Universitario de Matem\'atica Pura y Aplicada, Universitat Polit\`ecnica de Val\`encia, Camino de Vera s/n, 46022 Valencia, Spain}
	\email{tapedraz@mat.upv.es}
	
	\author[J. Rodr\'{\i}guez-L\'opez]{Jes\'us Rodr\'{\i}guez-L\'opez}
	\address[J. Rodr\'{\i}guez-L\'opez]{Instituto Universitario de Matem\'atica Pura y Aplicada, Universitat Polit\`ecnica de Val\`encia, Camino de Vera s/n, 46022 Valencia, Spain}
	\thanks{The research of the two last authors is part of the project PID2022-139248NB-I00 funded by MICIU/AEI/10.13039/501100011033 and ERDF/EU }
	\email{jrlopez@mat.upv.es}

	\subjclass[2020]{46A80; 54E99; 18M05}
	
	\keywords{Quasi-pseudometric modular space; quantale; $\mathscr{Q}$-category}
	
	\begin{abstract}
		We prove that the category of quasi-pseudometric modular spaces whose morphisms are the nonexpansive mappings is isomorphic to a quantale enriched category. To achieve this, we construct an appropriate quantale of isotone functions. We also show that, by means
		of this isomorphism, the topology associated with a quasi-pseudometric
		modular coincides with that generated by its corresponding quantale enriched
		category.
		
		Furthermore, we demonstrate that the class of quasi-pseudometrizable topological spaces coincides with the topological spaces whose topology is induced by a quasi-pseudometric modular. 
	\end{abstract}
	
	\date{This is the author's version of the article published in: Filomat 39, no. 19 (2025), 6693-6710. 
		The final published version is available via DOI at \href{https://doi.org/10.2298/FIL2519693L}{10.2298/FIL2519693L}.}

	\maketitle

	\section{Introduction}

	Nakano introduced the concept of modular \cite{Nakano50} to obtain a more detailed theory of Dedekind complete Riesz spaces and it was further extended to Riesz spaces and vector spaces.  A modular on a vector space is a nonnegative real-valued function, symmetric, convex, left-continuous, and non-identically null in each half-line. Its importance comes from the fact that you can construct a normed vector subspace from a modular, with the so-called Luxemburg norm \cite{KhamsiKoz15}. Moreover, modular spaces extend the Lebesgue, Riesz, and Orlicz spaces. 
	
	Recently, motivated by problems from multivalued analysis, Chistyakov \cite{Chis10a,Chis10b} introduced a general theory of modulars in arbitrary sets (removing the requirement of an algebraic structure in the underlying set) under the name of metric modular space. Roughly speaking, a metric modular space is a nonempty set endowed with a parameterized family $\{w_t\}_{t>0}$ of two-variable functions valued at $[0,+\infty]$ satisfying certain axioms that are consistent with the classical theory of modulars (see Definition \ref{def:qpmm}). The monograph \cite{Chis15} written by Chistyakov is a comprehensive study of the metric and topological properties of metric modular spaces. In particular, he introduced two different topologies in a metric modular space: the so-called metric topology and modular topology. The modular topology turns out to be the topologization  \cite[Theorem 4.3.5]{Chis15} of a non-topological convergence called \emph{modular convergence} \cite[Definition 4.2.1]{Chis15}, that extends the modular convergence defined by Musielak and Orlicz \cite{MusiOrlicz59}. 
	
	There also exists an asymmetric version of metric modular spaces, named quasi-pseudometric modular spaces, introduced by Sebogodi \cite{SebogodiPhD} in 2019, for which there is also a parallel theory to a certain extent.
	
	The purpose of this paper is to keep on exploring the theory of quasi-pseudometric modular spaces. Specifically, our objective is twofold. First, we aim to contribute to the basic theory of quasi-pseudometric modular spaces. This will be addressed in Section \ref{sec:qpmms}. After recalling the basic definitions, we introduce a quasi-uniformity (Proposition \ref{prop: W base uniformity}) on every quasi-pseudometric modular space, having as entourages the modular entourages considered by Chistyakov \cite[Section 4.1.2]{Chis15}. The topology generated by this quasi-uniformity is the quasi-pseudometric topology of the quasi-pseudometric modular. Moreover, we will show that the topology of a quasi-pseudometric space is also induced by a quasi-pseudometric modular (Theorem \ref{thm:qpm}). In addition, we analyze some concepts of functions between quasi-pseudometric modular spaces that can be considered as morphisms for the category of quasi-pseudometric modular spaces, which will be necessary for the second aim of the paper that we next discuss. 
	
	The study of the metrizability of a topological space has been one of the main research areas of general topology. Since not every topological space is metrizable, some authors have taken a different approach to this problem, searching for a more general concept of metric in such a way that every topology comes from a generalized metric. Quasi-pseudometrics (metrics that do not satisfy neither the symmetry axiom nor the non-degeneracy axiom) are probably the first generalized metrics but there are still topologies that are not quasi-pseudometrizable \cite{FL82}. 
	
	In 1978, Trillas and Alsina \cite{TrillasAlsina78} replaced the codomain of non-negative reals of a classic metric with an ordered algebraic structure. Kopperman tackled a similar approach \cite{Kopp88} in 1988, introducing the so-called continuity spaces by considering a value semigroup as the codomain of the metric. This afforded him to prove that every topological space is a continuity space.  Later on, Flagg \cite{Flagg97,FlaggKopp97} modified Kopperman's approach by evaluating a metric in a value quantale (see Definition \ref{def: value quantale} and sections \ref{sec:lq} and \ref{sec:Qc}) which provides important advantages with respect to the original continuity spaces (see \cite{CookWeiss21}). Furthermore, Flagg noticed that, in the same way that quasi-pseudometric spaces are enriched categories as first noticed by Lawvere \cite{Lawv73}, continuity spaces are just enriched categories over a value quantale. Roughly speaking, an enriched category is a generalization of the concept of a category where the set of morphisms are objects of a monoidal category. Therefore, the continuity spaces are $\mathscr{Q}$-categories \cite[Section III.1.3]{BookMonoidalTopology} where $\mathscr{Q}$ is a value quantale.
	
	The second goal of this paper is to demonstrate that the category of quasi-pseudometric modular spaces is isomorphic to a $\mathscr{Q}$-category for a concrete value quantale $\mathscr{Q}$. We will show this in Section \ref{sec:final} where we prove that the family $\nabla$ of all isotone functions between $(0,+\infty)$ and $[0,+\infty]^\text{\normalfont op}$ can be endowed with a specific order and operation that makes it a value quantale (Proposition \ref{prop:civalueq}). Then, we provide an isomorphism between the category of quasi-pseudometric modular spaces and the $\Delta$-category (Theorem \ref{thm:iso}). Furthermore, we show that this isomorphism also preserves the topologies of the objects (Theorem \ref{thm:topology}).
	These results establish the enriched category theory as a frame for studying quasi-pseudometric modular spaces that could allow for analyzing their relationship with other topological structures.

	\section{Quasi-pseudometric modular spaces}\label{sec:qpmms}
	
	We start by recalling the definition of a quasi-pseudometric modular \cite{SebogodiPhD}, the asymmetric version of the metric modular introduced by Chistyakov \cite{Chis10a,Chis15}.
	
	\begin{definition}[\cite{Chis15,SebogodiPhD}]\label{def:qpmm}
		Let $X$ be a nonempty set. A function $w:(0,+\infty)\times X\times X\to[0,+\infty]$ is  a \textbf{quasi-pseudometric modular} on $X$ if for every $x,y,z\in X$ and all $t,s>0$ it verifies:
		\begin{enumerate}[(M1)]
			\item\label{modular nondegenerate} $w(t,x,x)=0$ for all  $t>0;$
			\item\label{modular triangular} $w(t+s,x,y)\leq w(t,x,z)+w(s,z,y)$.
		\end{enumerate}
		
		\noindent If, in addition, $w$ satisfies
		\begin{enumerate}
			\item[(M3)] $w(t,x,y)=w(t,y,x)=0$ for all $t>0$ if and only if $x=y$
		\end{enumerate}
		then $w$ is called a \textbf{quasi-metric modular}.
		
		\noindent If a quasi-(pseudo)metric modular $w$ verifies
		\begin{enumerate}
			\item[(M4)] $w(t,x,y)=w(t,y,x)$ for all $x,y\in X$ and all $t>0$
		\end{enumerate}
		then $w$ is said to be a (pseudo)metric modular on $X.$

		The pair $(X,w)$ is known as a \textbf{(quasi)-(pseudo)metric modular space}.

		Moreover, a
		(quasi)-(pseudo)metric modular $w$ on $X$ is said to be \textbf{left-continuous} if $w(\cdot,x,y):(0,+\infty)\to[0,+\infty]$ is left-continuous for every $x,y\in X.$ In this case we say that $(X,w)$ is a \textbf{left-continuous (quasi)-(pseudo)metric modular space}.

	\end{definition}

	\begin{example}[\cite{Chis15}]\label{ex: canonical modular}
		Given a (quasi)-(pseudo)metric space $(X,d)$ and a nonincreasing function $g:(0,+\infty)\to [0,+\infty]$ non-identically zero,  then $w_g:(0,+\infty)\times X\times X\to[0,+\infty]$ defined as
		$$w_g(t,x,y)=g(t)\cdot d(x,y)$$
		for all $x,y\in X$ and all $t>0$, is a (quasi)-(pseudo)metric modular on $X.$
		
		If $g(t)=\tfrac{1}{t}$ for all $t>0$ then $w_g$ will be called the \textbf{standard (quasi)-(pseudo) metric modular induced by $d$} and will be denoted by $w_d$, that is,
		$$w_d(t,x,y)=\frac{d(x,y)}{t}$$
		for all $x,y\in X$ and all $t>.0$
		
	\end{example}
	
	One of the most important properties of a quasi-pseudometric modular, which can be deduced from (M\ref{modular triangular}), the triangular inequality, is the following:
	
	\begin{proposition}[\mbox{\cite[Lemma 3.1.1]{SebogodiPhD}}]\label{prop:isotone}
		Let $(X,w)$ be a quasi-pseudometric modular space. Then the function $w(\_,x,y):(0,+\infty)\to[0,+\infty]$ is non-increasing for all $x,y\in X$.
	\end{proposition}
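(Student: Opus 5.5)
The plan is to derive monotonicity directly from (M\ref{modular triangular}), using (M\ref{modular nondegenerate}) to kill one of the two terms on the right-hand side. Fix $x,y\in X$ and take $0<s<t$; we must show $w(t,x,y)\leq w(s,x,y)$. The idea is to split the larger parameter as $t=s+(t-s)$, where both summands are strictly positive. This is exactly what allows (M\ref{modular triangular}) to be applied with the intermediate point chosen to be $y$ itself.

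Concretely, (M\ref{modular triangular}) with parameters $s$ and $t-s$ and middle point $z=y$ gives
$$w(t,x,y)=w(s+(t-s),x,y)\leq w(s,x,y)+w(t-s,y,y).$$
By (M\ref{modular nondegenerate}), $w(t-s,y,y)=0$, so $w(t,x,y)\leq w(s,x,y)$, which is the desired non-increasing property. One could equally take $z=x$ and split $t=(t-s)+s$, giving $w(t,x,y)\leq w(t-s,x,x)+w(s,x,y)$; this works just as well.

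There is no real obstacle. The only points to check are the following.
\begin{enumerate}[(i)]
\item The strict inequality $s<t$ ensures $t-s>0$, so that (M\ref{modular triangular}) is applicable. The case $s=t$ is trivial.
\item The arithmetic in $[0,+\infty]$ remains valid when $w(s,x,y)=+\infty$. In that case the inequality holds trivially, and adding $0$ to any element of $[0,+\infty]$ leaves it unchanged.
\end{enumerate}
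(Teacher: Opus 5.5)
Your proof is correct and is the standard deduction the paper points to. The paper gives no proof of its own: it cites Sebogodi and notes that the property follows from the triangle inequality (M2), which is what you do by splitting $t=s+(t-s)$, choosing $z=y$, and killing $w(t-s,y,y)$ with (M1).
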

	
	In \cite{Chis15}, Chistyakov considered two different topologies in a metric modular space that were later studied in the realm of quasi-pseudometric modular spaces in \cite{SebogodiPhD}: the metric topology and the modular topology. We provide here a new approach to the introduction of the metric topology by defining a quasi-uniformity from a quasi-pseudometric modular. 
	
	\begin{proposition}\label{prop: W base uniformity}
		Let $(X,w)$ be a quasi-pseudometric modular space. Given $t,\varepsilon>0$, define
		$$W^{w}_{t,\varepsilon}:=\{(x,y)\in X^2: w(t,x,y)<\varepsilon\}$$
		(we will omit the superscript $w$ if no confusion arises).
		\begin{enumerate}
			\item The family $\mathcal{B}=\{W^{w}_{t,\varepsilon}:t,\varepsilon>0\}$ is a base for a quasi-uniformity $\mathcal{W}_w$ on $X$. The elements $W^{w}_{t,\varepsilon}$ will be called \textbf{modular entourages}.
			\item $\left\{W^{w}_{\frac{1}{n},\frac{1}{n}}:n\in\mathbb{N}\right\}$ is a countable base for $\mathcal{W}_w.$
			\item If $w$ is a pseudometric modular, then $\mathcal{W}_w$ is a uniformity on $X.$
		\end{enumerate}
	\end{proposition}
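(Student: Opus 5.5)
We verify the standard criteria for a base of a quasi-uniformity: each member contains the diagonal, any two members contain a third, and every member $W$ contains some $V\circ V$. The tool throughout is the monotonicity in Proposition \ref{prop:isotone}: $w(\cdot,x,y)$ is non-increasing in $t$.

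For (1), the diagonal condition is immediate from (M\ref{modular nondegenerate}): $w(t,x,x)=0<\varepsilon$, so $\Delta_X\subseteq W_{t,\varepsilon}$.

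For the intersection property, fix $W_{t_1,\varepsilon_1}$ and $W_{t_2,\varepsilon_2}$ and put $t=\min\{t_1,t_2\}$, $\varepsilon=\min\{\varepsilon_1,\varepsilon_2\}$. If $w(t,x,y)<\varepsilon$, then monotonicity gives $w(t_i,x,y)\le w(t,x,y)<\varepsilon_i$ for $i=1,2$. Hence $W_{t,\varepsilon}\subseteq W_{t_1,\varepsilon_1}\cap W_{t_2,\varepsilon_2}$.

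For the composition property, given $W_{t,\varepsilon}$ we take $V=W_{t/2,\varepsilon/2}$. If $(x,z),(z,y)\in V$, then (M\ref{modular triangular}) gives
$$w(t,x,y)\le w(t/2,x,z)+w(t/2,z,y)<\varepsilon,$$
so $V\circ V\subseteq W_{t,\varepsilon}$. This is the step where the parameter $t$ has to be split, and it is the only point needing any care: the triangle inequality consumes time, so we halve both $t$ and $\varepsilon$. The conventions for $+\infty$ cause no trouble, since the relevant values are finite and below $\varepsilon/2$.

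For (2), given $W_{t,\varepsilon}$ we choose $n$ with $1/n\le\min\{t,\varepsilon\}$. By monotonicity, $w(1/n,x,y)<1/n$ implies $w(t,x,y)\le w(1/n,x,y)<1/n\le\varepsilon$. So $W_{1/n,1/n}\subseteq W_{t,\varepsilon}$, and this countable subfamily is a base.

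For (3), (M4) gives $W_{t,\varepsilon}^{-1}=W_{t,\varepsilon}$. Every base member is therefore symmetric, so the inverse of each entourage of $\mathcal W_w$ is again an entourage, and $\mathcal W_w$ is a uniformity.
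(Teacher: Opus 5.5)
Your proof is correct and follows the paper's argument step for step. You use (M1) for the diagonal, monotonicity for $W_{t_1\wedge t_2,\varepsilon_1\wedge\varepsilon_2}\subseteq W_{t_1,\varepsilon_1}\cap W_{t_2,\varepsilon_2}$ and for $W_{1/n,1/n}\subseteq W_{t,\varepsilon}$, (M2) with halved $t$ and $\varepsilon$ for $W_{t/2,\varepsilon/2}\circ W_{t/2,\varepsilon/2}\subseteq W_{t,\varepsilon}$, and (M4) for symmetry of the entourages; your triangle step also correctly writes the second term as $w(t/2,z,y)$, where the paper's displayed inequality has the typo $w(t/2,z,x)$.
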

	
	\begin{proof}
		We prove (1).
		
		By (M\ref{modular nondegenerate}), it is obvious that $\{(x,x):x\in X\}\subseteq W_{t,\varepsilon}$ for all $t,\varepsilon>0.$
		
		Let us see that $\mathcal{B}$ is a filter base.
		Given $t_1,t_2,\varepsilon_1,\varepsilon_2>0$, we claim that $W_{t_1\wedge t_2,\varepsilon_1\wedge \varepsilon_2}\subseteq W_{t_1,\varepsilon_1}\cap W_{t_2,\varepsilon_2}$. In fact if $(x,y)\in W_{t_1\wedge t_2,\varepsilon_1\wedge \varepsilon_2}$ then $w(t_1\wedge t_2,x,y)<\varepsilon_1\wedge \varepsilon_2.$
		Since $w(\_,x,y)$ is non-increasing then $\max\{w(t_1,x,y),w(t_2,x,y)\}\leq w(t_1\wedge t_2,x,y)<\varepsilon_1\wedge \varepsilon_2,$
		that is, $(x,y)\in W_{t_1,\varepsilon_1}\cap W_{t_2,\varepsilon_2}$.
		
		Now, by (M1), it is obvious that $\{(x,x):x\in X\}\subseteq W_{t,\varepsilon}$ for all $t,\varepsilon>0.$
		
		Let $t,\varepsilon>0$. Let us prove that $W_{\frac{t}{2},\frac{\varepsilon}{2}}\circ W_{\frac{t}{2},\frac{\varepsilon}{2}}\subseteq W_{t,\varepsilon}$. If $(x,y)\in W_{\frac{t}{2},\frac{\varepsilon}{2}}\circ W_{\frac{t}{2},\frac{\varepsilon}{2}}$, then there exists some $z\in X$ such that $(x,z),(z,y)\in W_{\frac{t}{2},\frac{\varepsilon}{2}}$, that is,
		\[\max\left\{w\left(\frac{t}{2},x,z\right),w\left(\frac{t}{2},z,x\right)\right\}<\frac{\varepsilon}{2}.\]
		By (M\ref{modular triangular}) we have that
		\[w(t,x,y)=w\left(\frac{t}{2}+\frac{t}{2},x,y\right)\leq w\left(\frac{t}{2},x,z\right)+w\left(\frac{t}{2},z,x\right)<\frac{\varepsilon}{2}+\frac{\varepsilon}{2}=\varepsilon.\]
		Hence, $(x,y)\in W_{t,\varepsilon}$ and $\mathcal{B}$ is a base of a quasi-uniformity on $X.$
		
		We next prove (2). Given some arbitrary $t,\varepsilon>0$ there exists $n_0\in\mathbb{N}$ such that $\frac{1}{n_0}<\min\{\varepsilon, t\}$. We claim that $W_{\frac{1}{n_0},\frac{1}{n_0}}\subseteq W_{t,\varepsilon}$.
		
		Take some $(x,y)\in W_{\frac{1}{n_0},\frac{1}{n_0}}$. Then $w\left(\frac{1}{n_0},x,y\right)<\frac{1}{n_0}$. Hence, since $w(\_,x,y)$ is non-increasing
		\[w(t,x,y)\leq w\left(\frac{1}{n_0},x,y\right)<\frac{1}{n_0}< \varepsilon,\]
		so $(x,y)\in W_{t,\varepsilon}$.
		
		Finally, to see (3), it is obvious that the modular entourages are symmetric in case that $w$ is a pseudometric modular. Thus, they form a base for a uniformity on $X$.
	\end{proof}
	
	\begin{definition}
		Let $(X,w)$ be a quasi-pseudometric modular space. The topology $\mathcal{T}(\mathcal{W}_w)$ generated by the quasi-uniformity $\mathcal{W}_w$ on $X$  will be called the \textbf{topology associated to the quasi-pseudometric modular} $w$. For simplicity, it will be also denoted by $\mathcal{T}(w)$.
		
		Then $\mathcal{T}(w)$ has as neighborhood base at $x\in X$ the family $\{W_{t,\varepsilon}(x):t,\varepsilon>0\}$ where
		$$W_{t,\varepsilon}(x)=\{y\in X:w(t,x,y)<\varepsilon\}.$$
		
	\end{definition}

	\begin{example}\label{ex:topology_sqpmm}
		Let $(X,d)$ be a quasi-pseudometric space. Consider the standard quasi-pseudometric modular $w_d$ on $X$ induced by $d$ (see Example \ref{ex: canonical modular}) given by  $$w(t,x,y)=\frac{d(x,y)}{t}$$ for all $x,y\in X$ and all $t>0.$ Then $\mathcal{T}(w)=\mathcal{T}(d)$. Let us check this.
		
		Notice first that for all $t,\varepsilon>0$ and $x\in X$,
		\[W_{t,\varepsilon}(x)=\left\{y\in X: w(t,x,y)=\frac{d(x,y)}{t}<\varepsilon\right\}=\{y\in X: d(x,y)<t\varepsilon\}=B(x,t\varepsilon).\]
		Hence, the neighborhood base at any $x\in X$ in $\mathcal{T}(w)$ coincides with all the open balls in $\mathcal{T}(d)$, so they generate the same topology.
	\end{example}

	\begin{remark}
		Given a pseudometric modular space $(X,w)$, Chistyakov \cite[Theorem 2.2.1]{Chis15} (see also \cite[Theorem 2.6]{Chis10a}) proved that the function $d_w(x,y):X\times X\to [0,+\infty)$ given by
		$$d_w(x,y)=\inf\{t>0:w(t,x,y)\leq t\}$$
		for all $x,y\in X,$ is an extended pseudometric on $X$ (i.e., a pseudometric that it is allowed to take the value $+\infty$). In case that $(X,w)$ is a quasi-pseudometric modular space, then $d_w$ is an extended quasi-pseudometric on $X$ \cite[Theorem 3.1.2]{SebogodiPhD}.
		
		Hence we can consider the open ball topology $\mathcal{T}(d_w)$ generated by the extended quasi-pseudometric $d_w$ on $X.$ For pseudometric modulars, Chistyakov \cite[Section 4.1.2]{Chis15} studied this topology, that he called \emph{metric topology}.
		The corresponding study in which $w$ is a quasi-pseudometric modular was performed in \cite{SebogodiPhD}.
		
		Moreover, in a quasi-pseudometric modular space $(X,w),$ we have that $\mathcal{T}(d_w)=\mathcal{T}(w)$ on $X,$ that is, the topology generated by the quasi-uniformity $\mathcal{W}_w$ is equal to the topology associated with the extended quasi-pseudometric $d_w.$ To see this, it suffices to observe that the quasi-uniformity $\mathcal{U}_{d_w}$ is in fact $\mathcal{W}_w$ since
		$$\{(x,y)\in X\times X:d_w(x,y)<\min\{t,\varepsilon\}\}\subseteq W_{t,\varepsilon},$$
		$$W_{\varepsilon,\varepsilon}\subseteq \{(x,y)\in X\times X:d_w(x,y)\leq \varepsilon\}.$$
	\end{remark}

	Therefore, the topology associated with a quasi-pseudometric modular is generated by a quasi-pseudometric, that is, it is quasi-pseudometrizable. But the converse is also true as we next show.

	\begin{definition}
		A topological space $(X,\mathcal{T})$ is said to be \textbf{quasi-pseudomodulable} if $\mathcal{T}=\mathcal{T}(w)$ for some quasi-pseudometric modular $w$ on $X$.
	\end{definition}

	\begin{theorem}\label{thm:qpm}
		A topological space is quasi-pseudomodulable if and only if is quasi-pseudo\-metrizable.
	\end{theorem}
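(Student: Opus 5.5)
The proof splits into the two implications, and each is essentially handled by material already established above.

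For the direction ``quasi-pseudometrizable $\Rightarrow$ quasi-pseudomodulable'', let $(X,\mathcal{T})$ satisfy $\mathcal{T}=\mathcal{T}(d)$ for some quasi-pseudometric $d$ on $X$. I will take the standard quasi-pseudometric modular $w_d(t,x,y)=d(x,y)/t$ of Example \ref{ex: canonical modular}. That example shows (with $g(t)=1/t$, which is nonincreasing and not identically zero) that $w_d$ satisfies (M1) and (M2). Example \ref{ex:topology_sqpmm} then gives $W_{t,\varepsilon}(x)=B(x,t\varepsilon)$. Hence the neighborhood bases of $\mathcal{T}(w_d)$ and $\mathcal{T}(d)$ at each point coincide, so $\mathcal{T}=\mathcal{T}(w_d)$.

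For the converse, let $\mathcal{T}=\mathcal{T}(w)$ for a quasi-pseudometric modular $w$. The plan is to use the remark preceding the theorem: $d_w(x,y)=\inf\{t>0:w(t,x,y)\le t\}$ is an extended quasi-pseudometric, and $\mathcal{T}(d_w)=\mathcal{T}(w)$. To check the two entourage inclusions stated there, I will use Proposition \ref{prop:isotone}.
\begin{itemize}
\item First inclusion: if $d_w(x,y)<\min\{t,\varepsilon\}$, pick $s<\min\{t,\varepsilon\}$ with $w(s,x,y)\le s$. Monotonicity then gives $w(t,x,y)\le w(s,x,y)\le s<\varepsilon$.
\item Second inclusion: if $w(\varepsilon,x,y)<\varepsilon$, then $\varepsilon$ belongs to the set whose infimum defines $d_w(x,y)$, so $d_w(x,y)\le\varepsilon$.
\end{itemize}
Together these show that the base of $\mathcal{W}_w$ and the base of the quasi-uniformity of $d_w$ are mutually cofinal.

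The remaining obstacle is that $d_w$ may take the value $+\infty$, whereas quasi-pseudometrizability asks for a finite-valued quasi-pseudometric. I will replace $d_w$ by $\rho=\min\{d_w,1\}$. This truncation is finite, still satisfies $\rho(x,x)=0$, and still satisfies the triangle inequality. To see the latter: if either term on the right-hand side is at least $1$, the inequality holds trivially; otherwise $\rho$ agrees with $d_w$ on both terms. Moreover, $\rho$ has the same balls of radius less than $1$ as $d_w$, so $\mathcal{T}(\rho)=\mathcal{T}(d_w)=\mathcal{T}(w)=\mathcal{T}$. This shows the space is quasi-pseudometrizable and completes the proof.
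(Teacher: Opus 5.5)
Your proof is correct. The first direction matches the paper's argument, but the converse takes a genuinely different route.

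\textbf{The paper's converse.} The paper does not use $d_w$ at all. It notes that $\mathcal{W}_w$ has the countable base $\{W_{1/n,1/n}:n\in\mathbb{N}\}$ (Proposition~\ref{prop: W base uniformity}(2)). It then invokes the Fletcher--Lindgren metrization theorem for quasi-uniformities with a countable base. That theorem directly yields a finite-valued quasi-pseudometric inducing $\mathcal{W}_w$, and hence $\mathcal{T}(w)$.

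\textbf{Your converse.} You carry out the route that the paper only sketches in the remark preceding the theorem. You check both entourage inclusions between the $d_w$-entourages and the modular entourages, using the monotonicity of $w(\cdot,x,y)$. You also repair the point that remark glosses over: $d_w$ may take the value $+\infty$. Truncating to $\min\{d_w,1\}$ fixes this, and your verification of the triangle inequality and of the unchanged small balls is correct.

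\textbf{Self-containedness.} Your argument depends only on $d_w$ satisfying the triangle inequality. You take this from the result cited in the remark, but it also follows in two lines from (M2): if $w(s_1,x,z)\le s_1$ and $w(s_2,z,y)\le s_2$, then $w(s_1+s_2,x,y)\le s_1+s_2$.

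\textbf{What each approach buys.} Yours is constructive and avoids any external metrization theorem. It shows explicitly that $\mathcal{W}_w$ is the quasi-uniformity of the concrete quasi-pseudometric $\min\{d_w,1\}$. The paper's proof is shorter and needs neither $d_w$ nor the truncation step. However, it rests on a nontrivial external theorem and gives no formula for the quasi-pseudometric.
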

	
	\begin{proof}
		Let $(X,\mathcal{T})$ be a quasi-pseudometrizable topological space. Then there exists some quasi-pseudometric $d$ on $X$ such that $\mathcal{T}=\mathcal{T}(d)$. By Example \ref{ex:topology_sqpmm}, $\mathcal{T}(d)=\mathcal{T}(w_d),$ where $w_d$ is the standard quasi-pseudometric modular associated with $d.$ Hence $\mathcal{T}$ is quasi-pseudomodulable.

		Conversely, suppose that there exists a quasi-pseudometric modular $w$ on $X$ such that $\mathcal{T}=\mathcal{T}(w)$. Since $\mathcal{T}(w)$ is induced by a quasi-uniformity $\mathcal{W}_w$ with a countable base, then it is quasi-pseudometrizable \cite{FL82}.
	\end{proof}
	
	We observe that, in general, the set $W_{t,\varepsilon}(x)$ is not open in $\mathcal{T}(w)$ even for metric modulars, as the next example shows.

	\begin{example}
		Let $X=\{x,y\}\cup\{z_n\}_{n\in\mathbb{N}}$ and define $w:(0,+\infty)\times X\times X\to[0,+\infty]$ as
		\begin{align*}
			w(t,a,a)&=0,\ \forall\; a\in X,\ \forall\; t>0.\\
			w(t,x,y)&=\begin{cases}
				1 &\text{ if } 0\leq t<1,\\
				0 &\text{ if } t\geq1,
			\end{cases}\\
			w(t,x,z_n)&=\begin{cases}
				1 &\text{ if } 0\leq t\leq1,\\
				0 &\text{ if } t>1,
			\end{cases}\\
			w(t,y,z_n)&=\begin{cases}
				\frac{1}{n} &\text{ if } 0\leq t<1,\\
				0 &\text{ if } t\geq1,
			\end{cases}\\
			w(t,z_n,z_m)&=\begin{cases}
				\frac{1}{\min\{n, m\}} &\text{ if } 0\leq t<1,\\
				0 &\text{ if } t\geq 1.
			\end{cases}\\
		\end{align*}
		It is straightforward to check that $(X,w)$ is a metric modular space.
		
		Let us see that $W_{1,\frac{1}{2}}(x)$ is not open in $\mathcal{T}(w).$ Since $w(1,x,y)=0$ then  $y\in W_{1,\frac{1}{2}}(x).$ We show that for all $t,\varepsilon>0$, $W_{t,\varepsilon}(y)\nsubseteq W_{1,\frac{1}{2}}(x)$ which shows that $W_{1,\frac{1}{2}}(x)$ is not open.

		Given any $t,\varepsilon>0$, there exists some $n_0\in\mathbb{N}$ such that $\frac{1}{n_0}<\varepsilon$. Thus,
		\[w(t,y,z_{n_0})=\left\{\begin{array}{ll}
			\frac{1}{n_0} &\text{ if }  0\leq t<1\\
			0 &\text{ if }  t\geq1
		\end{array}\right\}<\varepsilon.\]
		Hence, $z_{n_0}\in W_{t,\varepsilon}(y).$ Nevertheless,
		\[w(1,x,z_{n_0})=1>\frac{1}{2},\]
		which implies that $z_{n_0}\notin W_{1,\frac{1}{2}}(x)$. In conclusion, $W_{t,\varepsilon}(y)\nsubseteq W_{1,\frac{1}{2}}(x)$ for all $t,\varepsilon>0$.
	\end{example}
	
	Observe that in the previous example, $w(\_,x,y)$ is not left-continuous. This fact is not casual as it is inferred from the next result.
	
	\begin{proposition}
		Let $(X,w)$ be a left-continuous quasi-pseudometric modular space. Then $W_{t,\varepsilon}(x)$ is open for all $t,\varepsilon>0$ and for all $x\in X.$
	\end{proposition}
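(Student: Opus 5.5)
To show that $W_{t,\varepsilon}(x)$ is open in $\mathcal{T}(w)$, I will show that every $y\in W_{t,\varepsilon}(x)$ has a basic neighbourhood $W_{s,\delta}(y)$ contained in $W_{t,\varepsilon}(x)$. The idea is to spend a little of the time parameter $t$ and a little of the slack $\varepsilon-w(t,x,y)$ on the step from $y$ to a nearby point. Left-continuity is what makes the time trade-off possible without losing the strict inequality.

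Fix $y$ with $w(t,x,y)<\varepsilon$, and put $\eta:=\varepsilon-w(t,x,y)>0$; note that $w(t,x,y)$ is finite. Since $w(\_,x,y)$ is left-continuous at $t$, the values $w(r,x,y)$ converge to $w(t,x,y)$ as $r\to t^-$. So I can choose $r\in(0,t)$ with
\[
w(r,x,y)<w(t,x,y)+\frac{\eta}{2}.
\]
This is the key step, and it is exactly where the hypothesis is used. In the preceding example the lack of left-continuity at $t=1$ is what breaks the argument: $w(r,x,y)=1$ for all $r<1$.

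Now set $s:=t-r>0$ and $\delta:=\eta/2$. Take any $z\in W_{s,\delta}(y)$. By (M2),
\[
w(t,x,z)=w(r+s,x,z)\le w(r,x,y)+w(s,y,z)<w(t,x,y)+\frac{\eta}{2}+\frac{\eta}{2}=\varepsilon .
\]
Hence $z\in W_{t,\varepsilon}(x)$, so $W_{s,\delta}(y)\subseteq W_{t,\varepsilon}(x)$. Since the sets $W_{s,\delta}(y)$ form a neighbourhood base at $y$ for $\mathcal{T}(w)$, this shows that $W_{t,\varepsilon}(x)$ is a neighbourhood of each of its points, and therefore it is open.

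The only delicate point is choosing $r<t$ strictly, so that $s=t-r$ is positive. Right-continuity would not help here, because the triangle inequality (M2) only lets time be added, never removed. No monotonicity is needed for this step, since left-continuity gives the estimate on $w(r,x,y)$ directly.
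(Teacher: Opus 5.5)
Your proof is correct and takes essentially the same route as the paper. Both arguments use left-continuity to pick $r<t$ with $w(r,x,y)$ still below $\varepsilon$, and then apply (M2) to obtain $W_{t-r,\delta}(y)\subseteq W_{t,\varepsilon}(x)$. The only cosmetic differences are that the paper picks $r=t_{n_0}$ from the sequence $t_n=t-\frac{t}{2n}$ and takes the exact slack $\delta=\varepsilon-w(r,x,y)$, while you choose $r$ directly and split the slack as $\eta/2+\eta/2$.
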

	
	\begin{proof}
		Let $x\in X,$ $t,\varepsilon>0,$ and $y\in W_{t,\varepsilon}(x)$. Define $\eta:=\varepsilon-w(t,x,y)>0$ and $t_n:=t-\frac{t}{2n},$ so $(t_n)_{n\in\mathbb{N}}$ converges to $t.$ Since $w(\_,x,y)$ is left-continuous then $(w\left(t_n,x,y\right))_{n\in\mathbb{N}}$ converges to $w(t,x,y)$. Hence there exists $n_0\in\mathbb{N}$ such that $|w(t_n,x,y)-w(t,x,y)|<\eta$ for all $n\geq n_0$. In particular
		\[w(t_{n_0},x,y)<\eta+w(t,x,y)=\varepsilon.\]
		Let us define $\delta:=\varepsilon-w(t_{n_0},x,y)$. We claim that $W_{t-t_{n_0},\delta}(y)\subseteq W_{t,\varepsilon}(x)$. Take $z\in W_{t-t_{n_0},\delta}(y)$. Then $w(t-t_{n_0},y,z)<\delta$ so
		\[w(t,x,z)=w(t-t_{n_0}+t_{n_0},x,z)\leq w(t_{n_0},x,y)+w(t-t_{n_0},y,z)<w(t_{n_0},x,y)+\delta=\varepsilon.\]
		Hence, $W_{t-t_{n_0},\delta}(y)\subseteq W_{t,\varepsilon}(x)$.
	\end{proof}

	Next, we study which morphisms can be considered between quasi-pseudo\-metric modular spaces to obtain an appropriate category.
	
	We first recall the following concept introduced in \cite{MonSintuKumam11,OOSebo22} for pseudometric modular spaces (see also \cite{DehEshaEba12}).

	\begin{definition}[\cite{MonSintuKumam11,OOSebo22}]
		A function $f:(X,w_1)\to (Y,w_2)$ between two quasi-pseudometric modular spaces is said to be \textbf{Lipschitz} if there exists $k>0$ such that
		$$w_2(k\cdot t,f(x),f(y))\leq w_1(t,x,y)$$
		for every $x,y\in X$ and every $t>0.$
		
		If $k=1$ then $f$ is called \textbf{nonexpansive}.
	\end{definition}
	
	\begin{remark}
		If the above condition is only satisfied when the parameter $t$ belongs to an interval $(0,t_0]$, then $f$ is called \emph{modular Lipschitzian} \cite{Chis15b}.
	\end{remark}
	
	We next introduce a new notion.

	\begin{definition}
		A function $f:(X,w_1)\to (Y,w_2)$ between two quasi-pseudometric modular spaces is said to be \textbf{strongly uniformly continuous} if given $t>0$ there exists $s>0$ such that
		$$w_2(t,f(x),f(y))\leq w_1(s,x,y)$$
		for every $x,y\in X.$
	\end{definition}
	
	\begin{proposition}
		Let $(X,w_1), (Y,w_2)$ be two quasi-pseudometric modular spaces. Each statement implies its successor:
		\begin{enumerate}[(1)]
			\item $f:(X,w_1)\to (Y,w_2)$ is Lipschitz;
			\item $f:(X,w_1)\to (Y,w_2)$ is strongly uniformly continuous;
			\item $f:(X,\mathcal{W}_{w_1})\to (Y,\mathcal{W}_{w_2})$ is uniformly continuous.
		\end{enumerate}
	\end{proposition}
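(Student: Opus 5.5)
The plan is to prove the two implications (1)$\Rightarrow$(2) and (2)$\Rightarrow$(3) directly from the definitions. The only tools needed are the monotonicity of $w(\_,x,y)$ (Proposition \ref{prop:isotone}) and the base of modular entourages from Proposition \ref{prop: W base uniformity}.

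For (1)$\Rightarrow$(2), suppose $f$ is Lipschitz with constant $k>0$, so that $w_2(kt,f(x),f(y))\leq w_1(t,x,y)$ for all $t>0$ and all $x,y\in X$. Given $t>0$, we take $s:=t/k$. Substituting $s$ for the parameter in the Lipschitz inequality gives
\[w_2(t,f(x),f(y))=w_2(k s,f(x),f(y))\leq w_1(s,x,y)\]
for all $x,y\in X$. This is exactly strong uniform continuity. Monotonicity is not even needed here, since the rescaling is exact.

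For (2)$\Rightarrow$(3), we must show that for every entourage $V$ of $\mathcal{W}_{w_2}$ there is an entourage $U$ of $\mathcal{W}_{w_1}$ with $(f\times f)(U)\subseteq V$. Since the modular entourages form a base, it suffices to treat $V=W^{w_2}_{t,\varepsilon}$. Let $s>0$ be the number given by strong uniform continuity for this $t$, and put $U:=W^{w_1}_{s,\varepsilon}$. If $(x,y)\in U$, then
\[w_2(t,f(x),f(y))\leq w_1(s,x,y)<\varepsilon,\]
so $(f(x),f(y))\in W^{w_2}_{t,\varepsilon}$, as required.

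There is no real obstacle. The only point needing care is to reduce uniform continuity to the basic entourages via Proposition \ref{prop: W base uniformity}(1), and to note that the same $\varepsilon$ works on both sides.
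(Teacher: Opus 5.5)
Your proposal is correct and follows the paper's proof essentially verbatim. For (1)$\Rightarrow$(2) you use the rescaling $s=t/k$, and for (2)$\Rightarrow$(3) you choose a basic entourage $W^{w_2}_{t,\varepsilon}\subseteq V$ and pull it back to $W^{w_1}_{s,\varepsilon}$ with the same $\varepsilon$; as you note, the monotonicity of $w(\_,x,y)$ is never actually needed.
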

	
	\begin{proof}
		(1) $\Rightarrow$ (2) By assumption, there exists $k>0$ such that
		$$w_2\left(k\cdot \frac{t}{k},f(x),f(y)\right)=w_2(t,f(x),f(y))\leq w_1\left(\frac{t}{k},x,y\right)$$
		for every $x,y\in X$ and every $t>0.$ Hence, $f$ is strongly uniformly continuous.
		
		(2) $\Rightarrow$ (3) Suppose that $f:(X,w_1)\to (Y,w_2)$ is strongly uniformly continuous. Let $V\in \mathcal{W}_{w_2}$. Then we can find $t,\varepsilon>0$ such that $W^{w_2}_{t,\varepsilon}\subseteq V.$
		By assumption, there exists $s>0$ such that
		$$w_2(t,f(x),f(y))\leq w_1(s,x,y)$$
		for every $x,y\in X.$ Hence if $(x,y)\in W^{w_1}_{s,\varepsilon}$ then $(f(x),f(y))\in  W^{w_2}_{t,\varepsilon}$ so  $f:(X,\mathcal{W}_{w_1})\to (Y,\mathcal{W}_{w_2})$ is uniformly continuous.
	\end{proof}

	Notice that for standard quasi-pseudometric modulars, Lipschitz functions are equal to strongly uniformly continuous functions. 
	
	\begin{proposition}
		Let $(X,d), (Y,q)$ be two quasi-pseudometric spaces. The following statements are equivalent:
		\begin{enumerate}[(1)]
			\item $f:(X,d)\to (Y,q)$ is Lipschitz;
			\item $f:(X,w_d)\to (Y,w_q)$ is Lipschitz;
			\item $f:(X,w_d)\to (Y,w_q)$ is strongly uniformly continuous.
		\end{enumerate}
	\end{proposition}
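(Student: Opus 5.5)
We will prove the cycle (1) $\Rightarrow$ (2) $\Rightarrow$ (3) $\Rightarrow$ (1). We use the standard notion of a Lipschitz map between quasi-pseudometric spaces: there is $L>0$ with $q(f(x),f(y))\leq L\,d(x,y)$ for all $x,y\in X$. The implication (2) $\Rightarrow$ (3) is already contained in the preceding proposition, so only the other two need work.

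For (1) $\Rightarrow$ (2), take the constant $L>0$ from (1) and set $k=L$. Then for every $t>0$ and $x,y\in X$,
\[
w_q(Lt,f(x),f(y))=\frac{q(f(x),f(y))}{Lt}\leq \frac{L\,d(x,y)}{Lt}=\frac{d(x,y)}{t}=w_d(t,x,y),
\]
which is exactly the modular Lipschitz condition. Conversely (not needed, but it shows the first two notions match directly), if $w_q(kt,f(x),f(y))\leq w_d(t,x,y)$, multiplying by $kt$ gives $q(f(x),f(y))\leq k\,d(x,y)$.

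For (3) $\Rightarrow$ (1), fix any $t$, say $t=1$, and use strong uniform continuity to obtain $s>0$ with $w_q(1,f(x),f(y))\leq w_d(s,x,y)$ for all $x,y$. Unwinding the definition of the standard modulars gives $q(f(x),f(y))\leq d(x,y)/s$. Hence $f$ is Lipschitz with constant $1/s$.

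The step I expect to need the most care is (3) $\Rightarrow$ (1). The point to notice is that a single value of $t$ already suffices, because the standard modular scales linearly in $1/t$. A minor subtlety is the possible value $+\infty$ if extended quasi-pseudometrics were allowed. The conventions $\infty/t=\infty$ and $L\cdot\infty=\infty$ keep every inequality above valid, so no separate case split is required.
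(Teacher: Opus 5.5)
Your proposal is correct and follows essentially the same route as the paper. The paper also proves (1)$\Rightarrow$(2) by taking $k$ equal to the Lipschitz constant and (2)$\Rightarrow$(3) via the preceding proposition, and for (3)$\Rightarrow$(1) it likewise fixes $t=1$ to obtain $q(f(x),f(y))\leq d(x,y)/s$, i.e.\ Lipschitz constant $1/s$.
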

	
	\begin{proof}
		(1) $\Rightarrow$ (2) Since $f$ is Lipschitz there exists $k>0$ such that $q(f(x),f(y))\leq k\cdot d(x,y)$ for all $x,y\in X.$ Hence, for any $t>0$
		$$w_q(k\cdot t,f(x),f(y))=\frac{q(f(x),f(y))}{k\cdot t}\leq \frac{d(x,y)}{t}=w_d(t,x,y)$$
		which proves the statement.
		
		(2) $\Rightarrow$ (3) This follows from the previous Proposition.
		
		(3) $\Rightarrow$ (1). For $t=1$ we can find $s>0$ such that
		$$w_q(1,f(x),f(y))=q(f(x),f(y))\leq w_d(s,x,y)=\frac{d(x,y)}{s}$$
		for every $x,y\in X.$ Hence $f:(X,d)\to (Y,q)$ is Lipschitz with constant $\tfrac{1}
		{s}.$
	\end{proof}
	
	We denote by $\mathsf{QPMod}$ the category whose objects are
	the quasi-pseudometric modular spaces and whose morphisms are the strongly uniformly continuous maps. When we consider the nonexpansive maps as morphisms, we denote this category by $\mathsf{QPMod}_n$. Then $\mathsf{QPMod}_n$ is a subcategory of $\mathsf{QPMod}$.
	
	Moreover, we denote by $\mathsf{LQPMod}$ (resp. $\mathsf{LQPMod}_n$) the full subcategory of $\mathsf{QPMod}$ (resp. $\mathsf{QPMod}_n$) whose objects are the left-continuous quasi-pseudometric modular spaces. It turns out that $\mathsf{LQPMod}$  is a reflective subcategory of $\mathsf{QPMod}$.
	
	\begin{proposition}
		$\mathsf{LQPMod}$ is a reflective full subcategory of $\mathsf{QPMod}$ whose reflector is the functor $\mathscr{L}:\mathsf{QPMod}\to\mathsf{LQPMod}$ given by $\mathscr{L}((X,w))=(X,\widetilde{w})$ and leaving morphisms unchanged, where $\widetilde{w}$ is the left regularization of $w$ defined as
		$$\widetilde{w}(t,x,y)=\bigwedge_{0<s<t} w(s,x,y).$$
		for every $x,y\in X$ and every $t>0$ {\rm(}see \cite[Definition 1.2.4]{Chis15}{\rm)}.
	\end{proposition}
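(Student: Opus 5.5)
The plan is to verify three things in turn. First, $\mathscr{L}$ is well defined on objects, that is, $\widetilde{w}$ is a left-continuous quasi-pseudometric modular. Second, $\mathscr{L}$ is well defined on morphisms, so it is a functor. Third, the identity map $1_X:(X,w)\to(X,\widetilde{w})$ is a reflection arrow with the required universal property. Throughout I will use Proposition \ref{prop:isotone} (monotonicity of $w(\_,x,y)$) together with the two elementary comparisons
$$w(t,x,y)\le \widetilde{w}(t,x,y)\le w(r,x,y)\quad\text{for all } 0<r<t.$$
The first holds because $w(s,x,y)\ge w(t,x,y)$ for $s<t$. The second holds because the infimum runs over $r$.

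For the first point, (M1) is immediate. For (M2), I fix $t,s>0$ and $x,y,z$, and take arbitrary $a<t$ and $b<s$. Then $a+b<t+s$, so
$$\widetilde{w}(t+s,x,y)\le w(a+b,x,y)\le w(a,x,z)+w(b,z,y).$$
Taking the infimum over $a$ and $b$ separately gives $\widetilde{w}(t+s,x,y)\le\widetilde{w}(t,x,z)+\widetilde{w}(s,z,y)$; this also covers the cases where one side is $+\infty$. For left-continuity, I note that $\widetilde{w}(\_,x,y)$ is nonincreasing, so its left limit at $t$ equals $\inf_{r<t}\widetilde{w}(r,x,y)$. The comparisons above squeeze this quantity between $\inf_{r<t}w(r,x,y)$ and $\inf_{r'<t}w(r',x,y)$, so it equals $\widetilde{w}(t,x,y)$. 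I expect this step to be the main technical point, mainly because of care with extended values in $[0,+\infty]$.

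For the functor and the unit, I argue as follows.
\begin{itemize}
\item The identity $1_X:(X,w)\to(X,\widetilde{w})$ is strongly uniformly continuous, since given $t$ one can take $s=t/2$ and get $\widetilde{w}(t,x,y)\le w(t/2,x,y)$.
\item If $f:(X,w_1)\to(Y,w_2)$ is strongly uniformly continuous, then so is $f:(X,\widetilde{w}_1)\to(Y,\widetilde{w}_2)$. Given $t$, choose $s$ for $t/2$; then
$$\widetilde{w}_2(t,f(x),f(y))\le w_2(t/2,f(x),f(y))\le w_1(s,x,y)\le\widetilde{w}_1(s,x,y).$$
\item Functoriality (identities and composition) is trivial because morphisms are left unchanged as maps.
\end{itemize}

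For the universal property, let $(Y,v)$ be left-continuous and let $f:(X,w)\to(Y,v)$ be strongly uniformly continuous. I must show that the same map $f:(X,\widetilde{w})\to(Y,v)$ is a morphism. Given $t$, pick $s$ with $v(t,f(x),f(y))\le w(s,x,y)\le\widetilde{w}(s,x,y)$. The factorization $f=f\circ 1_X$ is then clear. Uniqueness is automatic, because any factorizing morphism must coincide with $f$ as a set map, since $1_X$ is the identity. (One may also note that $\widetilde{v}=v$ when $v$ is left-continuous, so $\mathscr{L}$ fixes $\mathsf{LQPMod}$, although the argument does not need this.) Finally, fullness of $\mathsf{LQPMod}$ in $\mathsf{QPMod}$ holds by definition.
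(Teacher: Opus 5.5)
Your proof is correct and follows the paper's argument. The paper phrases reflectivity as a hom-set bijection, while you phrase it as the universal property of the unit $1_X:(X,w)\to(X,\widetilde{w})$; both rest on the same inequalities $w(t,x,y)\le\widetilde{w}(t,x,y)\le w(r,x,y)$ for $0<r<t$, and the only real difference is that you verify directly that $\widetilde{w}$ is a left-continuous quasi-pseudometric modular, which the paper simply cites from Chistyakov's Proposition 1.2.5.
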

	
	\begin{proof}
		
		Following \cite[Proposition 1.2.5]{Chis15} we have that $\widetilde{w}$ is a left-continuous quasi-pseudometric modular on $X.$
		
		Moreover, let $f:(X,w_1)\to (Y,w_2)$ be a strongly uniformly continuous mapping. Given $t>0$ and $0<r<t$ there exists $s>0$ such that
		$$w_2(r,f(x),f(y))\leq w_1(s,x,y)$$
		for all $x,y\in X.$ Therefore,
		\begin{align*}
			\widetilde{w_2}(t,f(x),f(y))&=\bigwedge_{0<t'<t} w_2(t',f(x),f(y))\leq w_2(r,f(x),f(y))\\&\leq w_1(s,x,y)\leq \widetilde{w_1}(s,x,y)=\bigwedge_{0<s'<s} w_1(s',x,y)
		\end{align*}
		so $f:(X,\widetilde{w_1})\to (Y,\widetilde{w_2})$ is strongly uniformly continuous. Hence $\mathscr{L}$ is a functor.
		
		We next check that $\mathscr{L}$ is the left adjoint of the inclusion functor $\mathscr{I}:\mathsf{LQPMod}\to\mathsf{QPMod}.$ Let $(X,\omega_1)\in\mathsf{QPMod} $ and $(Y,w_2)\in \mathsf{LQPMod}$. Suppose that $f:(X,w_1)\to (Y,w_2)$ is strongly uniformly continuous. Given $t>0$ there exists $s>0$ such that $w_2(t,f(x),f(y))\leq w_1(s,x,y)$ for all $x,y\in X.$ Since $w_1(s,x,y)\leq \widetilde{w_1}(s,x,y)$ then $f:(X,\widetilde{w_1})\to (Y,w_2)$ is also strongly uniformly continuous.
		
		Now, let $g:(X,\widetilde{w_1})\to (Y,w_2)$ be strongly uniformly continuous. Given $t>0$ we can find $s>0$ such that
		$$w_2(t,f(x),f(y))\leq \widetilde{w_1}(s,x,y)=\bigwedge_{0<r<s} w_1(r,x,y).$$
		Hence $g:(X,w_1)\to (Y,w_2)$ is strongly uniformly continuous.
	\end{proof}
	
	\begin{remark}
		We observe that given $x,y\in X$, then $\widetilde{w}(\_,x,y)$ is the upper semicontinuous regularization or upper envelope of $w(\_,x,y)$, since this function is non-increasing (see \cite[Chapter 1.3]{BeerBook}).
	\end{remark}
	
	\begin{remark}\label{rem:functor}
		Observe that the above proof does not work using the categories $\mathsf{LQPMod}_n$ and $\mathsf{QPMod}_n$, although the same mapping between these two categories is still a functor.  
		For example, let $X$ be a set with at least two different points and consider the modular metric $x$ on $X$ given by
		$$w(t,x,y)=\begin{cases}
			0&\text{ if }x= y, t>0,\\
			1&\text{ if }x\neq y,0<t<1,\\
			0&\text{ if }x\neq y,t\geq 1,
		\end{cases}$$
		for all $x,y\in X$, $t>0.$ It is clear that its left regularization is
		$$\widetilde{w}(t,x,y)=\begin{cases}
			0&\text{ if }x= y, t>0,\\
			1&\text{ if }x\neq y,0<t\leq 1,\\
			0&\text{ if }x\neq y,t>1,
		\end{cases}$$
		for all $x,y\in X$, $t>0.$
		
		The identity map $i:X\to X$ is nonexpansive when $X$ is endowed with the metric modular $w$. However $i:(X,\widetilde{w})\to (X,w)$ is not nonexpansive since
		$$\widetilde{w}(1,x,y)=1\not\leq w(1,x,y)=0$$
		where $x,y$ are two distinct points of $X.$

	\end{remark}
	
	\section{Lattices and quantales}\label{sec:lq}
	
	The second goal of this paper is to establish an equivalence between the category of quasi-pseudometric modular spaces and a category enriched over a quantale (see Section \ref{sec:final}). Thus we need some preliminary concepts about order theory that will be useful later. Our main references for this section are \cite{GierzHoffKeiLawMisScott,BookMonoidalTopology,BookQuantales}.
	
	Recall that a \emph{partial order} $\leq$ on a nonempty set $X$ is a reflexive, antisymmetric, and transitive relation on $X.$ In this case, the pair $(X,\leq)$ is a \emph{partially ordered set} (a \emph{poset} for short). In this case, the opposite relation $\leq^\text{\normalfont op}$ given by
	$$x\leq^\text{\normalfont op} y\text{ if and only if }y\leq x$$
	for all $x,y\in X$, is also a partial order on $X.$ If no confusion arises, we will write $X^\text{\normalfont op}$ as short for $(X,\leq^\text{\normalfont op}).$
	
	A function $f:(X,\leq_1)\to (Y,\leq_2)$ between partially ordered sets is called \emph{isotone} if
	$$x\leq_1 y\text{ implies }f(x)\leq_2 f(y)$$
	for all $x,y\in X.$ The category of partially ordered sets with isotone maps as morphisms will be denoted by $\mathsf{POSet}.$

	Furthermore, a poset $(L,\leq)$ where every finite subset has an infimum and supremum is a \emph{lattice}. If every subset has an infimum and supremum, then it is a \emph{complete lattice}. If $A\subseteq L$ then $\bigvee A$, $\bigwedge A$ will denote the supremum and the infimum of $A$ respectively. If we want to emphasize the partial order that is used to compute the supremum or the infimum, we will write $\overset{{\scriptscriptstyle \leq}}{\bigvee},\overset{{\scriptscriptstyle\leq}}{\bigwedge}.$

	\begin{definition}\label{def: well below}
		Let $(L,\leq)$ be a complete lattice. Given $a,b\in L$, then $a$ is \textbf{well-below} $b$ ($a\lhd b$) if
		$$\text{for all } S\subseteq L\text{ such that }b\leq\bigvee S,\ \text{ there exists } s_0\in S\text{ such that } a\leq s_0.$$
	\end{definition}

	\begin{proposition}[Properties of the well-below order]\label{lema: well below}
		Let $(L,\leq)$ be a complete lattice.
		\begin{enumerate}
			\item $x\lhd y\Rightarrow x\leq y$.
			\item $x\lhd y\leq z$ or $x\leq y\lhd z$ implies $x\lhd z$.
			\item $\perp\lhd\, x$ if and only if $x\neq\perp.$
		\end{enumerate}
	\end{proposition}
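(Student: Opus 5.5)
All three items follow directly from Definition \ref{def: well below} by choosing a suitable test set $S$. The recurring trick is to use a singleton $S$, whose supremum is its only element.

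For (1), suppose $x\lhd y$. We take $S=\{y\}$. Since $y\leq\bigvee S=y$, the definition gives some $s_0\in S$ with $x\leq s_0$, and $s_0$ must be $y$. Hence $x\leq y$.

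For (2) there are two cases.
\begin{enumerate}
\item Assume $x\lhd y\leq z$. Let $S\subseteq L$ with $z\leq\bigvee S$. By transitivity $y\leq\bigvee S$, so $x\lhd y$ yields $s_0\in S$ with $x\leq s_0$. Thus $x\lhd z$.
\item Assume $x\leq y\lhd z$. Let $S\subseteq L$ with $z\leq\bigvee S$. Then $y\lhd z$ gives $s_0\in S$ with $y\leq s_0$, hence $x\leq y\leq s_0$. Thus $x\lhd z$.
\end{enumerate}
Neither case needs more than transitivity of $\leq$.

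For (3), the only subtle point is the empty set, for which $\bigvee\emptyset=\perp$.
\begin{enumerate}
\item Suppose $\perp\lhd x$ and, for contradiction, $x=\perp$. Take $S=\emptyset$. Then $x=\perp\leq\bigvee\emptyset=\perp$, so the definition would produce some $s_0\in\emptyset$, which is impossible. Hence $x\neq\perp$.
\item Suppose $x\neq\perp$ and let $S\subseteq L$ with $x\leq\bigvee S$. If $S$ were empty, we would get $x\leq\perp$, i.e.\ $x=\perp$, a contradiction. So $S$ contains some $s_0$, and $\perp\leq s_0$ holds trivially. Hence $\perp\lhd x$.
\end{enumerate}

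I do not expect a genuine obstacle. The one place needing care is (3): the argument relies on $S=\emptyset$ being admissible in the definition and on the convention $\bigvee\emptyset=\perp$ in a complete lattice.
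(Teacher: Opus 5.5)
Your proof is correct. The paper states this proposition without proof, as a standard fact, and your direct verification from Definition \ref{def: well below} is the routine argument being taken for granted: the singleton test set $\{y\}$ for (1), transitivity of $\leq$ for (2), and the empty family with $\bigvee\varnothing=\perp$ for (3). Your care with $S=\varnothing$ in (3) is well placed, and it also shows that the paper's aside in Example \ref{ex: P(X)}, that $\varnothing\lhd A$ for all $A\in\mathcal{P}(X)$, should exclude $A=\varnothing$.
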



	\begin{example}\label{ex: 01}
		In the complete lattice $([0,1],\leq)$, we have that $x\lhd y$ if and only if $x<y$. Let us check this. By Proposition \ref{lema: well below}, $x\lhd y$ implies $x\leq y$. Moreover $x\neq y$. Otherwise taking $S=\{s\in[0,1]: s<y\}$, we have that $y=\bigvee S$, although $s<x=y$ for all $s\in S$ which contradicts $x\lhd y$.
		
		\noindent Suppose now that $x<y$. Let $S\subseteq [0,1]$ such that $y\leq\bigvee S$. Then for all $\varepsilon>0$, there exists some $s_0\in S$ such that $y-\varepsilon\leq s_0$. Taking $\varepsilon=y-x>0$ we are done.
	\end{example}
	
	\begin{example}\label{ex: P(X)}
		Let us see that in $(\mathcal{P}(X),\subseteq)$, if $A,B\neq\varnothing$, then $A\lhd B$ if and only if  $A=\{b\}$ for some $b\in B$. Suppose that $A\lhd B$, then taking $\mathcal{S}=\{\{b\}\}_{b\in B}$ we have that $B=\bigvee \mathcal{S}=\bigcup \mathcal{S}$ and thus, there exists some $b_0\in B$ such that $A\subseteq \{b_0\}$, so $A=\{b_0\}$. 
		
		Conversely, suppose that $A=\{b\}$ for some $b\in B$. Let $\mathcal{S}\subseteq \mathcal{P}(X)$ such that $B\subseteq \bigvee\mathcal S$. Since $b\in B\subseteq \bigvee \mathcal S$, then there is some $S_0\in \mathcal S$ such that $b\in S_0$, which means that $A=\{b\}\subseteq S_0$. Hence $A\lhd B.$
		
		Observe that $\varnothing\lhd A$ for all $A\in\mathcal{P}(X).$
	\end{example}

	
		

	The proof of the following result is trivial so it is omitted.
	
	\begin{lemma}\label{lem: not trivial}
		Suppose that $(L,\leq)$ is a complete lattice. Then $\perp\lhd\top$ if and only if $L$ is not trivial.
	\end{lemma}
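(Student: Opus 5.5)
We prove the two implications separately, working directly from Definition \ref{def: well below}. Here ``trivial'' means that $L$ has only one element, or equivalently that $\perp=\top$.

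First suppose $L$ is not trivial, so $\perp\neq\top$. Then $\perp\lhd\top$ is exactly item (3) of Proposition \ref{lema: well below} with $x=\top$. For a self-contained argument, take any $S\subseteq L$ with $\top\leq\bigvee S$. If $S$ were empty, then $\bigvee S=\perp$, and we would get $\top\leq\perp$, hence $\top=\perp$, a contradiction. So $S$ contains some element $s_0$, and $\perp\leq s_0$ holds trivially. Hence $\perp\lhd\top$.

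Conversely, suppose $\perp\lhd\top$ and take $S=\varnothing$. We have $\bigvee\varnothing=\perp$. If $L$ were trivial, then $\top=\perp=\bigvee\varnothing$, so the hypothesis $\top\leq\bigvee S$ is satisfied. The definition of $\perp\lhd\top$ would then give some $s_0\in\varnothing$, which is impossible. Therefore $L$ is not trivial.

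The only point that needs care is the convention that the empty family is allowed in Definition \ref{def: well below}, with $\bigvee\varnothing=\perp$. It is precisely the empty family that rules out $\perp\lhd\perp$, and this is what makes the argument work.
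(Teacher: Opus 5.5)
Your proof is correct: nontriviality forces any $S$ with $\top\leq\bigvee S$ to be nonempty, and the empty family refutes $\perp\lhd\perp$, so the lemma is just the case $x=\top$ of Proposition~\ref{lema: well below}(3), as you note. The paper omits the proof as trivial, and your argument is the evident one it leaves to the reader.
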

	
		
		
	
	
	\begin{definition}[\mbox{\cite[Definition I-2.8.]{GierzHoffKeiLawMisScott}}]
		A complete lattice $(L,\leq)$ is said to be \textbf{completely distributive} if given $\{a_{ij}: i\in I,\ j\in K(i)\}\subseteq L$ then
		\[\bigwedge_{i\in I}\bigvee_{j\in K(i)}a_{ij}=\bigvee_{f\in M}\bigwedge_{i\in I}a_{i,f(i)},\]
		where $M=\prod_{i\in I}K(i)$.
	\end{definition}
	
	\begin{theorem}[\cite{Raney53}] 
		A complete lattice $(L,\leq)$ is completely distributive if and only if $\forall b\in L$,
		\[b=\bigvee\{a\in L: a\lhd b\}.\]
	\end{theorem}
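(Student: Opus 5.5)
Both implications go directly through the definition of complete distributivity, applied to the right families. Throughout, for $b\in L$ write $\downarrow\!\!\lhd b:=\{a\in L: a\lhd b\}$ and $c:=\bigvee \downarrow\!\!\lhd b$. Since $a\lhd b$ implies $a\leq b$ by Proposition \ref{lema: well below}, we always have $c\leq b$, so in each direction only $b\leq c$ (or its analogue) needs proof.

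\emph{($\Rightarrow$)} Assume $L$ is completely distributive and fix $b$. Let $I$ be the set of all $S\subseteq L$ with $b\leq\bigvee S$, and for $S\in I$ put $K(S)=S$ and $a_{S,s}=s$. Each $\bigvee_{s\in S}s\geq b$, so $b\leq\bigwedge_{S\in I}\bigvee_{s\in S}s$. By complete distributivity this meet equals $\bigvee_{f\in M}\bigwedge_{S\in I}f(S)$, where $M$ is the set of choice functions $f$ with $f(S)\in S$. It therefore suffices to show that $a_f:=\bigwedge_{S\in I}f(S)\lhd b$ for every $f\in M$. Given $S$ with $b\leq\bigvee S$, we have $S\in I$, so $s_0:=f(S)\in S$ satisfies $a_f\leq s_0$; this is exactly $a_f\lhd b$. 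Hence $b\leq\bigvee_f a_f\leq c$. One degenerate case should be noted: if some $S\in I$ is empty (i.e.\ $b=\perp$), then $M=\varnothing$ and the right side is $\perp$, which is consistent because $b=\perp$ anyway.

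\emph{($\Leftarrow$)} Assume $b=\bigvee\{a: a\lhd b\}$ for all $b$. The inequality $\bigvee_{f}\bigwedge_i a_{i,f(i)}\leq\bigwedge_i\bigvee_j a_{ij}$ holds in every complete lattice, so only the reverse is needed. Put $b:=\bigwedge_i\bigvee_{j\in K(i)}a_{ij}$ and take any $a\lhd b$. For each $i$ we have $b\leq\bigvee_{j\in K(i)}a_{ij}$, so the definition of $\lhd$ applied to $S=\{a_{ij}:j\in K(i)\}$ yields some $f(i)\in K(i)$ with $a\leq a_{i,f(i)}$. Choosing one such index for each $i$ (axiom of choice) gives $f\in M$ with $a\leq\bigwedge_i a_{i,f(i)}$. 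Taking the join over all $a\lhd b$ and using the hypothesis,
\[
b=\bigvee\{a:a\lhd b\}\leq\bigvee_{f\in M}\bigwedge_i a_{i,f(i)}.
\]

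The main obstacle is in ($\Rightarrow$): choosing the index family so that the distributive law returns exactly elements well below $b$. The choice "all covers $S$ of $b$" resolves this, and the only remaining subtleties are the empty-cover and empty-product edge cases, which are handled as described above.
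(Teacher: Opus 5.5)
The paper gives no proof of this theorem: it is quoted from Raney \cite{Raney53} and then used as a black box in the examples on $[0,1]$ and $\mathcal{P}(X)$. There is therefore no in-paper argument to compare your route against.

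Your proof is correct. It is the standard direct argument, and it works with exactly the paper's definitions: the indexed-family form of complete distributivity, and $\lhd$ quantified over arbitrary subsets $S$ (the ``totally below'' relation).

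\begin{itemize}
\item In ($\Rightarrow$), the key step is indexing the distributive law by all covers $S$ of $b$. This makes each $\bigwedge_{S}f(S)$ well below $b$ essentially by definition.
\item In ($\Leftarrow$), applying the definition of $\lhd$ to each row and choosing indices is all that is needed.
\end{itemize}

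Your edge-case handling is sound.
\begin{itemize}
\item In ($\Rightarrow$), the case $b=\perp$ needs no separate discussion, since $b\le c$ is then trivial. Your remark about $M=\varnothing$ is correct but not required.
\item In ($\Leftarrow$), you do not address an empty $K(i)$ explicitly, but your argument already covers it. An empty $K(i)$ forces $b\le\bigvee\varnothing=\perp$. Nothing is well below $\perp$: test the definition with $S=\varnothing$. So your step ``take any $a\lhd b$'' is vacuous there rather than failing.
\end{itemize}

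Compared with the paper's bare citation, your argument makes the section self-contained at the cost of a few lines.
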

	
	\begin{example}\label{ex: 01 distributive}
		The complete lattice $([0,1],\leq)$ is completely distributive. By Example \ref{ex: 01}, $b=\bigvee\{a\in[0,1]: a<b\}=\bigvee\{a\in[0,1]: a\lhd b\}$.
	\end{example}
	
	\begin{example}\label{ex: P(X) distributive}
		The complete lattice $(\mathcal{P}(X),\subseteq)$ is completely distributive. By Example \ref{ex: P(X)}, $B=\bigcup_{b\in B}\{b\}=\bigcup\{A\subseteq X: A\lhd B\}$ for every nonempty set $B.$ On the other hand, by Proposition \ref{lema: well below} (3), $\{A\subseteq X: A\lhd\varnothing\}=\varnothing$ and this concludes our claim.
	\end{example}


	\begin{remark}\label{rem:product_completelydistributive}
		Notice that if $\{(L_\lambda,\leq_\lambda):\lambda\in \Lambda\}$ is an arbitrary family of completely distributive lattices then its Cartesian product $(\prod_{\lambda\in \Lambda} L_\lambda,\preceq)$ endowed with the componentwise partial order $\preceq$ is also completely distributive. This is clear since given $\{a_{ij}: i\in I,\ j\in K(i)\}\subseteq \prod_{\lambda\in \Lambda} L_\lambda$, then for every $\lambda\in \Lambda$
		\[\bigwedge_{i\in I}\bigvee_{j\in K(i)}a_{ij}(\lambda)=\bigvee_{f\in M}\bigwedge_{i\in I}a_{i,f(i)}(\lambda),\]
		where $M=\prod_{i\in I}K(i)$,
		since $(L_\lambda,\leq_\lambda)$ is completely distributive. As the supremum and infimum on $\prod_{\lambda\in\Lambda} L_\lambda$ is computed componentwisely then
		
		\[\bigwedge_{i\in I}\bigvee_{k\in K(i)}a_{ij}=\bigvee_{f\in M}\bigwedge_{i\in I}a_{i,f(i)},\]
		so $(\prod_{\lambda\in \Lambda} L_\lambda,\preceq)$ is completely distributive.
	\end{remark}

	\begin{definition}[\cite{Flagg97,FlaggKopp97}]\label{def:valuedl}
		A \textbf{value distributive lattice} is a completely distributive lattice  $(L,\leq)$ such that
		\begin{enumerate}[(VDL1)]
			\item\label{VL: bottom top} $\perp\lhd \top$
			\item\label{VL: supremum} $a,b\lhd\top\Rightarrow a\vee b\lhd\top$
		\end{enumerate}
	\end{definition}
	
	\begin{remark}
		Notice that: 
		\begin{itemize}
			\item  (VDL\ref{VL: bottom top}) just means that $L$ is not trivial by Lemma \ref{lem: not trivial}.
			\item (VDL\ref{VL: supremum}) just means that $\{a:a\lhd \top\}$ is directed.
		\end{itemize}
	\end{remark}

	\begin{example}
		Let $\mathbf{2}$ be the two element set $\{0,1\}$ endowed with the usual order $\leq.$ Then $(\mathbf{2},\leq)$ is a value distributive lattice.
	\end{example}
	
	\begin{example}\label{ex:0infinito}
		$([0,+\infty],\leq)$ (where $+$ represents the usual sum on the real numbers extended to $+\infty$ as usual) is a value distributive lattice.
	\end{example}
	
	We next introduce a crucial notion in our work: a quantale. This structure is a combination of order and a binary operation with some compatibility between them.
	
	\begin{definition}[\cite{BookQuantales}]
		A \textbf{quantale} is a triple $(\mathscr{Q},\preceq,\ast)$ where $(\mathscr{Q},\preceq)$ is a complete lattice and $\ast$ is a binary operation on $\mathscr{Q}$ such that
		\begin{enumerate}[(q1)]
			\item\label{quantal: semigroup} $(\mathscr{Q},\ast)$ is a semigroup.
			\item\label{quantal: right distributive} $a\ast(\bigvee_{i\in I}b_i)=\bigvee_{i\in I}(a\ast b_i)$.
			\item\label{quantal: left distributive} $(\bigvee_{i\in I}b_i)\ast a=\bigvee_{i\in I}(b_i\ast a)$.
		\end{enumerate}
		
		\noindent where $\{b_i:i\in I\}\subseteq\mathscr{Q}$  and $a\in\mathscr{Q}.$ \medskip
		
		\noindent A quantale $(\mathscr{Q},\preceq,\ast)$ is said to be:
		\begin{itemize}
			\item \textbf{commutative} if $\ast$ is commutative;
			\item \textbf{unital} if  $(\mathscr{Q},\ast)$ is a monoid with unit $1_\mathscr{Q};$
			\item  \textbf{integral} if it is unital and the unit is the top element of $(\mathscr{Q},\preceq)$, that is, $1_\mathscr{Q}=\top_{\mathscr{Q}}.$ If no confusion arises we will simply write $\top$ instead of $\top_{\mathscr{Q}}.$
		\end{itemize}	
	\end{definition}
	
	In the remainder of the paper, we will refer to commutative integral quantales as CI-quantales. Moreover, if no confusion arises, we will denote a quantale $(\mathscr{Q},\preceq,\ast)$ only by its underlying set $\mathscr{Q}.$

	Notice that in an integral quantale $(\mathscr{Q},\preceq,\ast)$ we have that $u\ast v\preceq u\wedge v$ for all $u,v\in \mathscr{Q}.$ In fact, $u=u\ast (v\vee \top)=(u\ast \top)\vee (u\ast v)=u\vee (u\ast v)$ so $u\ast v\preceq u.$ In a similar way, $u\ast v\preceq v.$
	
	\begin{example}
		$(\mathbf{2},\leq,\wedge)$ is a CI-quantale.
	\end{example}
	
	\begin{example}\label{ex: suma reales}
		$([0,+\infty],\leq,+)$ is a commutative unital quantale, but it is not integral since its unit is $0\neq\top=+\infty.$
		
		On the other hand, $\mathsf{P}_+=([0,+\infty],\leq^\text{\normalfont op},+)$ is a CI-quantale. This quantale is sometimes called the Lawvere quantale \cite{CookWeiss22} (see also \cite[Example II.1.10.1.(3)]{BookMonoidalTopology}).
	\end{example}

	\begin{example}
		Let $X$ be a nonempty set and $(\mathscr{Q},\preceq,\ast)$ be a quantale. Then we can endow the set $\mathscr{Q}^X$ of all maps $f:X\to\mathscr{Q}$ with the pointwise order that for simplicity we also denote by $\preceq.$  Then $(\mathscr{Q}^X,\preceq)$ is also a complete lattice (see for example \cite[Example 2.1.9]{BookQuantales}). Notice that meet and joins in $\mathscr{Q}^X$ are computed pointwisely.
		
		Moreover, defining a binary operation on $\mathscr{Q}^X$ pointwisely by means of $\ast$, that we again denote by $\ast$, turns $(\mathscr{Q}^X,\preceq,\ast)$ into a quantale.
		
		Furthermore, if $X$ is not only a set but also a partially ordered set, then the family $\mathcal{I}(\mathscr{Q}^X)$  of all the isotone maps between $X$ and $\mathscr{Q}$ is a sublattice of $\mathscr{Q}^X$ which is also a quantale.
		
	\end{example}
	
	%
	%
	%
	%
			%

	\begin{example}
		A complete lattice $(X,\preceq)$ such that $(X,\preceq,\wedge)$ is a quantale, is called a \textbf{complete Heyting algebra} or a \textbf{frame} \cite{GierzHoffKeiLawMisScott}.
		
		In particular, a topology $\mathcal{T}$ on a nonempty set $X$ has a quantale structure $(\mathcal{T},\subseteq,\cap).$
	\end{example}
	
	The following concept was introduced in \cite{Flagg97} to obtain a generalization of the notion of  a metric, as it replicates the essential properties of $[0,+\infty],$ the codomain of an extended metric.
	
	\begin{definition}[\cite{Flagg97,FlaggKopp97,CookWeiss21}]\label{def: value quantale}
		A \textbf{value quantale} is a quantale $(\mathscr{Q},\preceq,\ast)$ such that $(\mathscr{Q},\preceq)$ is a value distributive lattice.
	\end{definition}

	\begin{example}
		$(\mathbf{2},\leq,\wedge)$ is a value quantale.
	\end{example}
	
	\begin{example}
		$([0,1],\leq,\cdot)$ is a value quantale. It is obvious that it is a quantale. Notice that $\lhd$ is precisely $<$, so it immediately follows that it is a value quantale.     
	\end{example}
	
	\begin{example}
		$(\mathcal{P}(X),\subseteq,\bigcap)$ is a quantale but not a value quantale if $|X|>1$. Let $x,y\in X$ be two different points. By Example \ref{ex: P(X)}, $\{x\}\lhd X$ and $\{y\}\lhd X$. Nevertheless, $\{x\}\vee \{y\}=\{x,y\}\ntriangleleft X$ again by Example \ref{ex: P(X)}, so  (VDL2) does not hold.
	\end{example}

	\section{$\mathscr{Q}$-categories}\label{sec:Qc}
	
	As we have mentioned in the introduction, one of our main goals is to present quasi-pseudometric modular spaces as a particular example of a $\mathscr{Q}$-category, that is, an enriched category over a commutative unital quantale (see \cite{BookEnrichedCategory,BookMonoidalTopology}). This will be developed in the next section but we first present a summary of the notions that will be needed.

	\begin{definition}[\mbox{\cite[Section III.1.3]{BookMonoidalTopology}, c.f. \cite[Definition 3.1]{FlaggKopp97}}]
		Let $(\mathscr{Q},\preceq,\ast)$ be a commutative unital quantale. A \emph{$\mathscr{Q}$-category} is a pair $(X,q)$ where $X$ is a nonempty set and $q:X\times X\to\mathscr{Q}$ is a map such that: 
		\begin{enumerate}[(QC1)]
			\item\label{QC: qxx} $\top\preceq q(x,x),$
			\item\label{QC: triangular} $q(x,z)*q(z,y)\preceq q(x,y),$
		\end{enumerate}
		for all $x,y,z\in X.$
		
		\noindent  A $\mathscr{Q}$-functor is a map $f:(X,a)\to (Y,b)$ between $\mathscr{Q}$-categories such that
		$$a(x,y)\preceq b(f(x),f(y))$$
		for every $x,y\in X.$
		
		\noindent  $\mathscr{Q}$-categories and $\mathscr{Q}$-functors form a category denoted by $\mathscr{Q}$-$\mathsf{Cat}.$
	\end{definition}

	\begin{definition}
		A $\mathscr{Q}$-category $(X,q)$ is said to be:
		\begin{itemize}
			\item \emph{separated} if given $x,y \in X$, whenever $\top\preceq q(x,y)$ and $\top\preceq q(y,x)$ then $x=y.$
			\item \emph{symmetric} if $q(x,y)=q(y,x)$ for all $x,y\in X.$
		\end{itemize}
	\end{definition}
	
	We next provide several well-known examples of $\mathscr{Q}$-categories \cite{Flagg97,BookMonoidalTopology}.
	
	\begin{example}[\mbox{\cite[Example III.1.3.1.(1)]{BookMonoidalTopology}}]
		$\mathbf{2}$-categories and preordered sets are equivalent concepts.
		If $(X,a)$ is a $\mathbf{2}$-category, the binary relation $\preceq_a$ on $X$ given by $$x\preceq_a y\Leftrightarrow a(x,y)=1$$
		is a preorder on $X.$ A similar argument allows to convert a preordered set $(X,\preceq)$ into a $\mathbf{2}$-category.
		
		Furthermore, a $\mathbf{2}$-functor between two $\mathbf{2}$-categories $(X,a)$ and $(Y,b)$ is an isotone function between the preordered sets $(X,\preceq_a)$ and $(Y,\preceq_b).$ So $\mathbf{2}$-Cat is isomorphic to the category of preordered sets and isotone maps.
		
	\end{example}
	
	\begin{example}[\mbox{\cite[Example III.1.3.1.(2)]{BookMonoidalTopology}}]\label{ex:msec}
		$\mathsf{P}_+$-categories (see Example \ref{ex: suma reales}) are extended quasi-pseudometric spaces.
		
		If $(X,a)$ is a $\mathsf{P}_+$-category then $a:X\times X\to [0,+\infty]$ is a map verifying
		$$0\geq a(x,x)\hspace*{1cm}\text{and}\hspace*{1cm}a(x,z)+a(z,y)\geq a(x,y)$$
		for all $x,y,z\in X$, so $(X,a)$ is an \textbf{extended quasi-pseudometric space} \cite{KunziCh01} or an hemi-metric space \cite{GL13}.

		Moreover, a $\mathsf{P}_+$-functor between two $\mathsf{P}_+$-categories $(X,a)$, $(Y,b)$ is map $f:(X,a)\to (Y,b)$ verifying
		$$a(x,y)\geq b(f(x),f(y))$$
		for all $x,y\in X,$ that is, a nonexpansive mapping between the extended quasi-pseudometric spaces $(X,a)$, $(Y,b)$.
		
		Thus, $\mathsf{P}_+$-$\mathsf{Cat}$ is exactly the category $\mathsf{EQPMet}$ of extended quasi-pseudometric spaces.
		
	\end{example}

	In \cite{Flagg97} (see also \cite{CookWeiss21}), Flagg introduced a topology in a continuity space, that is, a $\mathscr{Q}$-category where $\mathscr{Q}$ is a value quantale. This topology was inspired by the classic open ball topology of a metric space, and the topology of the original continuity spaces of Kopperman \cite{Kopp88}, where metrics are valued in what he called a \emph{value semigroup} (see \cite{CookWeiss21}). This topology is important since it allows to prove that every topology comes from a $\mathscr{Q}$-category for a certain value quantale $\mathscr{Q}$ (\cite{CookWeiss21,Flagg97}). We recall the definition of this topology.
	
	\begin{definition}[\cite{Flagg97,FlaggKopp97}]
		Let $(X,a)$ be a $\mathscr{Q}$-category, with $\mathscr{Q}$ being a value quantale. Given $x\in X$ and $r\in\mathscr{Q}$ such that $r\lhd\top$, the \textbf{open ball} centered in $x$ with radius $r$ is defined as
		\[B(x,r):=\{y\in X: r\lhd a(x,y)\}.\]
	\end{definition}
	
	\begin{proposition}[\cite{Flagg97,FlaggKopp97}]\label{prop: base open balls quantale}
		Let $(X,a)$ be a $\mathscr{Q}$-category, with $\mathscr{Q}$ being a value quantale. Then $\{B(x,r): r\lhd\top,\ x\in X\}$ is a base for a topology $\mathcal{T}(a)$ on $X.$
	\end{proposition}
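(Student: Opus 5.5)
The plan is to verify the standard criterion for a base: the balls cover $X$, and whenever $z\in B(x,r)\cap B(y,s)$ there is a radius $u\lhd\top$ with $B(z,u)\subseteq B(x,r)\cap B(y,s)$. This breaks into an interpolation step and a triangle-inequality step.

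\textbf{Covering.} By (VDL1), $\perp\lhd\top$. By (QC1), $\top\preceq a(x,x)$, and then Proposition \ref{lema: well below}(2) gives $\perp\lhd a(x,x)$. Hence $x\in B(x,\perp)$ for every $x\in X$.

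\textbf{Interpolation.} Before handling intersections, I will establish two facts about a value quantale.
\begin{enumerate}
\item[(i)] If $r\lhd b$, there is $c$ with $r\lhd c\lhd b$. This is the standard interpolation property of $\lhd$ in a completely distributive lattice. By Raney's theorem, $b=\bigvee\{c: c\lhd b\}$ and each such $c=\bigvee\{d: d\lhd c\}$. So $b=\bigvee\{d:\exists c,\ d\lhd c\lhd b\}$, and $r\lhd b$ yields some $d$ of this form with $r\preceq d$. Then $r\lhd c$ by Proposition \ref{lema: well below}(2).
\item[(ii)] If $r\lhd\top$, there is $e\lhd\top$ with $r\lhd e\ast e$. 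This is the step I expect to be the main obstacle. My plan is to use $\top=\top\ast\top=\bigvee_{e\lhd\top}e\ast\bigvee_{e'\lhd\top}e'=\bigvee_{e,e'\lhd\top}e\ast e'$, which uses unitality with $1=\top$ and the join-distributivity (q2),(q3). Interpolating by (i), take $r\lhd r'\lhd\top$. The well-below definition then gives $e,e'\lhd\top$ with $r'\preceq e\ast e'$. By (VDL2), $f=e\vee e'\lhd\top$, and monotonicity of $\ast$ gives $r'\preceq f\ast f$, so $r\lhd f\ast f$. This requires the quantale to be integral (unit $=\top$), which holds in the intended setting of value quantales in \cite{Flagg97}; I would state that assumption explicitly if needed.
\end{enumerate}

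\textbf{Intersections.} Let $z\in B(x,r)\cap B(y,s)$, so $r\lhd a(x,z)$ and $s\lhd a(y,z)$.
\begin{enumerate}
\item[1.] By (i), choose $r'$ with $r\lhd r'\lhd a(x,z)$ and $s'$ with $s\lhd s'\lhd a(y,z)$.
\item[2.] By (ii), choose $e_1,e_2\lhd\top$ with $r\lhd e_1\ast e_1$ and $s\lhd e_2\ast e_2$, and set $u=e_1\vee e_2\vee r'\vee s'$. Each of these four elements is well below $\top$: $e_1,e_2$ by choice, and $r',s'$ because $r'\lhd a(x,z)\preceq\top$, and similarly for $s'$. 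By (VDL2), $u\lhd\top$.
\item[3.] Let $w\in B(z,u)$. Then $u\lhd a(z,w)$, so $u\preceq a(z,w)$. By (QC2) and monotonicity of $\ast$,
\[a(x,w)\succeq a(x,z)\ast a(z,w)\succeq r'\ast u.\]
\end{enumerate}

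\textbf{Remaining issue.} Step 3 does not yet give $r\lhd r'\ast u$, because $r'$ is not compared with $e_1$. The fix is to choose the interpolants more carefully. Apply (ii) in the form $a(x,z)=a(x,z)\ast\top=\bigvee_{e\lhd\top}a(x,z)\ast e$, using (q2). Since $r\lhd a(x,z)$, this yields some $e_1\lhd\top$ with $r\preceq a(x,z)\ast e_1$. Interpolating first via (i) upgrades this to $r\lhd a(x,z)\ast e_1$. Do the same for $y$ to get $e_2$, and set $u=e_1\vee e_2$. Then, for $w\in B(z,u)$,
\[a(x,w)\succeq a(x,z)\ast u\succeq a(x,z)\ast e_1,\]
so $r\lhd a(x,w)$. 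The symmetric argument gives $s\lhd a(y,w)$. Therefore $B(z,u)\subseteq B(x,r)\cap B(y,s)$, and the open balls form a base for $\mathcal{T}(a)$.
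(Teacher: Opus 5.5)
Your final argument (the one after ``Remaining issue'') is correct, and it is essentially the standard proof from Flagg; the paper cites that proof and does not reproduce it. The key steps are: interpolating $r\lhd r'\lhd a(x,z)$ and using $a(x,z)=\bigvee_{e\lhd\top}a(x,z)\ast e$ yields $e_1\lhd\top$ with $r\lhd a(x,z)\ast e_1$, and (VDL2) merges the two radii into $u=e_1\vee e_2$.

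A few tidy-ups. The lemma (ii) and steps 1--3 are never used and can be deleted. You should state explicitly that $z\in B(z,u)$, since $u\lhd\top\preceq a(z,z)$. Finally, integrality is not needed: with the paper's (QC1) one has $a(x,z)\preceq a(x,z)\ast\top\preceq a(x,z)\ast a(z,z)\preceq a(x,z)$ in any unital quantale.
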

	
	\color{black}
	
	\section{Quasi-pseudometric modular spaces as $\mathscr{Q}$-categories}\label{sec:final}
	
	In this section, we address the main goal of the paper: to establish an equi\-valence between quasi-pseudometric modular spaces and certain $\mathscr{Q}$-categories. To achieve this, we need to consider a particular quantale that we define using the next few results.
	
	\begin{lemma}\label{lem:deltacd}
		Let us consider the set
		$$\nabla:=\Big\{f:(0,+\infty)\to[0,+\infty]^\text{\normalfont op}\text{ such that } f\text{ is isotone}\Big\}$$ endowed with the pointwise order induced by the order $\leq^\text{\normalfont op}$ on the codomain $[0,+\infty]$, that is,
		\[f\leq^\text{\normalfont op}g\Leftrightarrow f(t)\leq^\text{\normalfont op}g(t),\ \text{ for all } t\in(0,\infty).\]
		Then $(\nabla,\leq^\text{\normalfont op})$ is a completely distributive lattice where the top and bottom elements are
		the constant 0 function denoted by $\mathbf{0}$, and the constant $\infty$ function denoted by $\boldsymbol{\infty}$, respectively.
	\end{lemma}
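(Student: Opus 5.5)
The plan is to view $\nabla$ as a subset of the product lattice $\prod_{t\in(0,+\infty)}[0,+\infty]^\text{\normalfont op}$, i.e.\ of all maps $(0,+\infty)\to[0,+\infty]^\text{\normalfont op}$ with the pointwise order. First, $([0,+\infty],\leq)$ is completely distributive: it is order-isomorphic to $([0,1],\leq)$, which is completely distributive by Example \ref{ex: 01 distributive}. Complete distributivity is self-dual (a classical result, \cite{GierzHoffKeiLawMisScott}), so $[0,+\infty]^\text{\normalfont op}$ is completely distributive as well. If one prefers to avoid that theorem, one can check Raney's criterion directly: in $[0,+\infty]^\text{\normalfont op}$ we have $a\lhd b$ iff $a>b$ in the usual order (with $\infty\lhd b$ for all $b\neq \infty$). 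Hence $b=\inf\{a: a>b\}$, which is the $\leq^\text{\normalfont op}$-join, and Raney's theorem applies. By Remark \ref{rem:product_completelydistributive}, the full product $\prod_t [0,+\infty]^\text{\normalfont op}$ is then completely distributive, with joins and meets computed pointwise.

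Second, $\nabla$ is closed under arbitrary pointwise joins and meets in the product. Let $\{f_i\}_{i\in I}\subseteq\nabla$ and $s\leq t$. For each $i$ we have $f_i(s)\leq^\text{\normalfont op} f_i(t)\leq^\text{\normalfont op}\bigvee_j f_j(t)$, so $\bigvee_i f_i(s)\leq^\text{\normalfont op}\bigvee_i f_i(t)$. Meets are handled dually. Thus $\nabla$ is a complete sublattice whose arbitrary joins and meets agree with those of the product. Any identity involving arbitrary joins and meets that holds in the product therefore holds in $\nabla$: every $\bigvee$, $\bigwedge$ in the distributivity equation, including the inner meets $\bigwedge_i a_{i,f(i)}$, lands in $\nabla$ and coincides with the product value. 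So $\nabla$ is completely distributive. The case $I=\varnothing$ also works, since the empty join (pointwise bottom, the constant $\infty$) and the empty meet (the constant $0$) are constant and hence isotone.

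Finally, for the top and bottom elements: the constant functions $\mathbf 0$ and $\boldsymbol\infty$ are trivially isotone. For every $f\in\nabla$ and every $t$, $f(t)\leq 0$ fails unless $f(t)=0$; in $\leq^\text{\normalfont op}$ terms, $f(t)\leq^\text{\normalfont op}0$ and $\infty\leq^\text{\normalfont op} f(t)$. Hence $\mathbf 0$ is the top element and $\boldsymbol\infty$ is the bottom element.

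The only step needing real care is the complete distributivity of $[0,+\infty]^\text{\normalfont op}$, since the paper only records it for the usual order. I would handle it through the Raney characterization above rather than cite self-duality.
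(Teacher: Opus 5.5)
Your proof is correct and follows essentially the same route as the paper: you view $\nabla$ inside the product $([0,+\infty]^{\text{\normalfont op}})^{(0,+\infty)}$, apply Remark \ref{rem:product_completelydistributive}, and use that joins and meets in $\nabla$ are computed pointwise. Your version is more careful than the paper's in two places: you justify the complete distributivity of $[0,+\infty]^{\text{\normalfont op}}$ directly via Raney's criterion, and you verify that $\nabla$ is closed under \emph{arbitrary} pointwise joins and meets, which is what inheriting complete distributivity actually needs (the paper only says ``sublattice'').
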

	
	\begin{proof}
		
		It is straightforward to verify that $(\nabla,\leq^\text{\normalfont op})$ is a partially ordered set. Moreover, it is easy to check that the supremum and infimum in $\nabla$ are computed pointwisely, that is, if $F\subseteq\nabla$ then
		\begin{align*}
			\left(\overset{\quad{\scriptscriptstyle\leq^{\text{\normalfont op}}}}{\bigvee}  F\right)(t)&=\overset{\quad{\scriptscriptstyle\leq^{\text{\normalfont op}}}}{\bigvee} \{f(t):f\in F\}\\
			\left(\overset{\quad{\scriptscriptstyle\leq^{\text{\normalfont op}}}}{\bigwedge} F\right)(t)&=\overset{\quad{\scriptscriptstyle\leq^{\text{\normalfont op}}}}{\bigwedge} \{f(t):f\in F\}
		\end{align*}
		for all $t>0$. Hence $(\nabla,\leq^\text{\normalfont op})$ is a complete lattice.
		
		Moreover,  $([0,+\infty]^{(0,+\infty)},\leq^\text{\normalfont op})$ is completely distributive since it is the Cartesian product of completely distributive lattices (see Remark \ref{rem:product_completelydistributive}.) Since $(\nabla,\leq^\text{\normalfont op})$ is a sublattice of $([0,+\infty]^{(0,+\infty)},\leq^\text{\normalfont op})$ then it is completely distributive.
	\end{proof}

	\begin{proposition}
		Let
		$$\nabla_{L}=\left\{f:(0,+\infty)\to[0,+\infty]^\text{\normalfont op}\text{ such that } f\text{ is isotone} \text{ and } f(t)=\bigvee^{\leq^\text{\normalfont op}}_{0<s<t} f(s)\right\}.$$
		Then $(\nabla_L,\leq^\text{\normalfont op})$ is a complete sublattice of $(\nabla,\leq^\text{\normalfont op}).$
	\end{proposition}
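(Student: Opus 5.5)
The plan is to translate everything into the usual order on $[0,+\infty]$ and check directly that $\nabla_L$ is closed under the pointwise joins and meets of $(\nabla,\leq^{\text{op}})$ computed in Lemma \ref{lem:deltacd}. Since isotonicity into $[0,+\infty]^{\text{op}}$ means $f$ is non-increasing in the usual order, and $\bigvee^{\leq^{\text{op}}}$ is the usual infimum, the defining condition of $\nabla_L$ reads
\[f(t)=\inf_{0<s<t} f(s)\quad\text{for all } t>0,\]
i.e.\ $f$ is left-continuous (as in the left regularization $\widetilde{w}$). For non-increasing $f$ the inequality $f(t)\leq \inf_{s<t}f(s)$ always holds. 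So in each case only the reverse inequality has to be verified. The constant functions $\mathbf{0}$ and $\boldsymbol{\infty}$ trivially lie in $\nabla_L$, which handles empty joins and meets.

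\emph{Joins.} Let $F\subseteq\nabla_L$ and $g=\bigvee^{\leq^{\text{op}}}F$, so that $g(t)=\inf_{f\in F}f(t)$ pointwise. Using $f(t)=\inf_{s<t}f(s)$ for each $f\in F$ and interchanging the two infima gives
\[g(t)=\inf_{f\in F}\inf_{s<t}f(s)=\inf_{s<t}\inf_{f\in F}f(s)=\inf_{s<t}g(s),\]
so $g\in\nabla_L$. This part is routine.

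\emph{Meets.} Let $g=\bigwedge^{\leq^{\text{op}}}F$, so that $g(t)=\sup_{f\in F}f(t)$. We must show $\inf_{s<t}\sup_{f\in F}f(s)\leq \sup_{f\in F}f(t)$. The intended argument takes $r<\inf_{s<t}g(s)$, picks for a sequence $s_n\uparrow t$ some $f_n\in F$ with $f_n(s_n)>r$, and then tries to transfer this to the value $f_n(t)$ using left-continuity of $f_n$ at $t$.

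This is the main obstacle, and I do not expect it to go through: the interchange of $\sup$ and $\inf$ is exactly where it fails. Left-continuity of each $f_n$ at $t$ gives no uniform control along $s_n$. Concretely, the functions $f_n=\chi_{(0,1-1/n]}\in\nabla_L$ have pointwise supremum $\chi_{(0,1)}$, which is not left-continuous at $t=1$.

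So the meet half of the claim fails for arbitrary families, and the statement as worded cannot be proved. What the argument above does establish is the following:
\begin{itemize}
\item $\nabla_L$ is closed under arbitrary joins of $\nabla$, and under finite meets. For finite $F$, a maximum of finitely many left-continuous non-increasing functions is again left-continuous.
\item $\nabla_L$ is a complete lattice in its own right, with meet given by the left regularization of the pointwise supremum.
\end{itemize}
The completeness claim follows because any poset closed under arbitrary joins is a complete lattice, its meets being joins of lower bounds.
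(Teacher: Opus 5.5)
Your proposal is correct, and the part you prove is the paper's argument. The paper's proof also shows only that the pointwise $\leq^{\text{op}}$-supremum of a family in $\nabla_L$ stays in $\nabla_L$; that supremum is the pointwise usual infimum. It does this by interchanging the two suprema, exactly as you do, and it explicitly restricts itself to this ("We only prove that the supremum \dots").

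Your counterexample for meets is also correct. Each $f_n=\chi_{(0,1-1/n]}$ is non-increasing and satisfies $f_n(t)=\inf_{0<s<t}f_n(s)$ for all $t>0$. Their meet in $(\nabla,\leq^{\text{op}})$ is, by Lemma~\ref{lem:deltacd}, the pointwise usual supremum $\chi_{(0,1)}$. This function vanishes at $t=1$, although $\inf_{0<s<1}\chi_{(0,1)}(s)=1$.

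So $\nabla_L$ is not closed under arbitrary meets of $\nabla$. ``Complete sublattice'' can only hold in the weaker sense you isolate: $\nabla_L$ is closed under all joins of $\nabla$. Hence it is a complete lattice whose joins are pointwise and whose meets are left regularizations of pointwise suprema. That is exactly what the paper's proof establishes.

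This weaker statement suffices for the later claim that $(\nabla_L,\leq^{\text{op}},\oplus)$ is a CI-quantale, since that claim uses only joins. Any step that inherits a meet-dependent property from $\nabla$ needs its own justification. An example is complete distributivity, which is required for the value-quantale claim. One way to supply it is to show that left regularization $\nabla\to\nabla_L$ is surjective and preserves arbitrary joins and meets.
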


	\begin{proof}
		We only prove that the supremum of a family $\mathcal{F}\subseteq\nabla_L$ belongs to $\nabla_L$ and that this supremum is computed pointwisely. In this way, let us define $F:(0,+\infty)\to[0,+\infty]^\text{\normalfont op}$ as $F(t)=\bigvee^{{\leq^\text{\normalfont op}}}\{f(t): f\in\mathcal{F}\}$ for all $t\in (0,+\infty)$.
		
		It is obvious that $F$ is isotone. Moreover, let $t\in (0,+\infty).$
		Since $F$ is isotone then $\bigvee^{{\leq^\text{\normalfont op}}}_{0<s<t} F(s)\leq^\text{\normalfont op} F\left(t\right).$ On the other hand, for all $f\in\mathcal{F}$ we have that $f(t)=\bigvee^{{\leq^\text{\normalfont op}}}_{0<s<t} f(s)$ so
		\begin{align*}
			F(t)&=\bigvee^{{\leq^\text{\normalfont op}}}\{f(t): f\in\mathcal{F}\}=\bigvee^{{\leq^\text{\normalfont op}}}\Big\{\bigvee^{{\leq^\text{\normalfont op}}}_{0<s<t} f(s): f\in\mathcal{F}\Big\}\leq  \bigvee^{{\leq^\text{\normalfont op}}}_{0<s<t}\Big\{\bigvee^{{\leq^\text{\normalfont op}}} f(s): f\in\mathcal{F}\Big\}\\
			&=\bigvee^{{\leq^\text{\normalfont op}}}_{0<s<t} F(s).
		\end{align*}
		
		Therefore, $F\in\nabla_L.$
	\end{proof}

	To prove that $(\nabla,\leq^\text{\normalfont op})$ is a value distributive lattice, the following lemma will be useful.
	
	\begin{lemma}\label{lem:wbnabla}
		Let $\boldsymbol{\infty}\neq f\in \nabla$. Then $f\lhd^\text{\normalfont op} \mathbf{0}$ if and only if
		\begin{enumerate}[(1)]
			\item
			there exists $s\in(0,+\infty)$ such that $f(t)=\infty$ for every $t\in (0,s)$ and
			\item $\bigvee^{\leq^\text{\normalfont op}} \{f(t):t\in (0,+\infty)\}<^\text{\normalfont op}0.$
		\end{enumerate}
	\end{lemma}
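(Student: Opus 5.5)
The proof is a direct unwinding of the definition of $\lhd^\text{\normalfont op}$ in $(\nabla,\leq^\text{\normalfont op})$, using that suprema in $\nabla$ are computed pointwise (Lemma \ref{lem:deltacd}). First I translate everything into the usual order of $[0,+\infty]$.

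An element $f\in\nabla$ is a non-increasing function $(0,+\infty)\to[0,+\infty]$. The relation $g\leq^\text{\normalfont op} f$ means $f(t)\leq g(t)$ for all $t$. A family $S\subseteq\nabla$ satisfies $\mathbf{0}\leq^\text{\normalfont op}\bigvee^{\leq^\text{\normalfont op}} S$ exactly when $\inf_{g\in S} g(t)=0$ for every $t>0$. Condition (2) says $c:=\inf_{t>0} f(t)>0$.

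Thus $f\lhd^\text{\normalfont op}\mathbf{0}$ means the following: whenever $S\subseteq\nabla$ has pointwise infimum identically $0$, there is $g\in S$ with $g(t)\leq f(t)$ for all $t>0$.

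\emph{Necessity.} I test the definition against two specific families whose pointwise infimum is $0$.

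For (1), I take $S_1=\{g_s:s>0\}$, where $g_s(t)=\infty$ for $t<s$ and $g_s(t)=0$ for $t\geq s$. Each $g_s$ is non-increasing, so $g_s\in\nabla$. For fixed $t$, choosing $s<t$ gives $g_s(t)=0$, so the pointwise infimum of $S_1$ is $0$. Hence some $g_s\leq f$ pointwise, which forces $f(t)=\infty$ for all $t\in(0,s)$.

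For (2), I take $S_2=\{\mathbf{c}:c>0\}$, the constant functions. Their pointwise infimum is $0$, so some constant $c>0$ satisfies $c\leq f(t)$ for all $t$. Therefore $\inf_t f(t)\geq c>0$, which is (2).

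\emph{Sufficiency.} Assume (1) and (2), with $s$ as in (1) and $c=\inf_t f(t)>0$. Let $S\subseteq\nabla$ have pointwise infimum identically $0$. Evaluating at the single point $s$, $\inf_{g\in S} g(s)=0<c$, so there is $g\in S$ with $g(s)<c$. I claim $g\leq f$ pointwise.
\begin{itemize}
\item For $t<s$ this holds because $f(t)=\infty$.
\item For $t\geq s$, monotonicity of $g$ gives $g(t)\leq g(s)<c\leq f(t)$.
\end{itemize}
Hence $g\leq^\text{\normalfont op} f$, and so $f\lhd^\text{\normalfont op}\mathbf{0}$.

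There is no real obstacle here. The one point needing care is the bookkeeping of the reversed order: $\bigvee^{\leq^\text{\normalfont op}}$ is a pointwise infimum in the usual order, and $<^\text{\normalfont op}0$ means strictly positive. The key idea in the sufficiency step is to reduce to the single evaluation point $t=s$ via monotonicity. The hypothesis $f\neq\boldsymbol{\infty}$ is not used in this argument; it only ensures that (2) refers to a genuine finite infimum.
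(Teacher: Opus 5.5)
Your proof is correct and follows the paper's argument. Necessity comes from testing the definition against families in $\nabla$ with pointwise infimum $0$, with constant functions giving (2). Sufficiency picks $g\in S$ with $g(s)$ below $\inf_t f(t)$ and then uses monotonicity of $g$ for $t\geq s$. The only difference is cosmetic: for (1) you argue directly with the family $g_s$ ($\infty$ on $(0,s)$, $0$ afterwards), whereas the paper argues by contradiction with $g_n=f+1$ on $(0,\tfrac1n)$ and then needs an extra isotonicity step to produce $s$. Your remark that the hypothesis $f\neq\boldsymbol{\infty}$ is never actually needed is also right.
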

	
	\begin{proof}
		Suppose that $f\lhd^\text{\normalfont op} \mathbf{0}.$
		
		We first prove (1). Suppose that $f(t)\neq\infty$ for every $t\in (0,+\infty).$ For each $n\in\mathbb{N}$, let $g_n:(0,+\infty)\to [0,+\infty]$ defined as
		$$g_n(t)=\begin{cases}
			f(t)+1&\text{ if }0<t<\frac{1}{n},\\
			0&\text{ if }\frac{1}{n}\leq t.
			
		\end{cases}$$
		It is obvious that $g_n\in\nabla$ for every $n\in\mathbb{N}$ and $\bigvee^{\leq^\text{\normalfont op}} g_n=\mathbf{0}.$ However, $f\not\leq^\text{\normalfont op} g_n$ for every $n\in\mathbb{N}$, which contradicts $f\lhd^\text{\normalfont op} \mathbf{0}.$
		
		Consequently, there exists $t_0\in (0,+\infty)$ such that $f(t_0)=+\infty.$ Since $f$ is isotone then $f(t)=+\infty$ for very $t\leq t_0.$ Define $s:=\bigvee\{t\in (0,+\infty):f(t)=\infty\}<\infty$ since $f\neq\boldsymbol{\infty}.$ Therefore $f(t)=\infty$ for every $t\in (0,s).$
		
		We next prove (2). Suppose that $\bigvee^{\leq^\text{\normalfont op}} \{f(t):t\in (0,+\infty)\}=0.$ For every $n\in\mathbb{N}$, let $h_n:(0,+\infty)\to [0,+\infty]$ defined as $$h_n(t)=\frac{1}{n}\text{ for all }t\in (0,+\infty).$$ Obviously, $h_n\in\nabla$ for every $n\in\mathbb{N}$ and $\bigvee^{\leq^\text{\normalfont op}}_{n\in\mathbb{N}} h_n=\mathbf{0}.$ However, given $n\in\mathbb{N}$ we can find $t_n\in (0,+\infty)$ such that $h_n(t_n)=\frac{1}{n}<^\text{\normalfont op}f(t_n)\leq^\text{\normalfont op} 0.$ Hence $f\not\leq^\text{\normalfont op} h_n$ for all $n\in\mathbb{N}$, which contradicts $f\lhd^\text{\normalfont op}\mathbf{0}.$
		
		Conversely, let $s\in(0,+\infty)$ such that $f(t)=\infty$ for very $t\in (0,s)$ and let $a=\bigvee^{\leq ^\text{\normalfont op}} \{f(t):t\in (0,+\infty)\}<^\text{\normalfont op} 0$. Let us prove that $f\lhd^\text{\normalfont op} \mathbf{0}$. Let $\mathcal{F}\subseteq \nabla$ such that $\bigvee^{\leq^\text{\normalfont op}} \mathcal{F}=\mathbf{0}.$ Then we can find $g\in \mathcal{F}$ such that $a\leq^\text{\normalfont op} g(s).$ Now, if $t<s$, then by hypothesis (1), $f(t)=\infty\leq^\text{\normalfont op}g(t)$. On the other hand, if $t\geq s$, then by hypothesis (2) and the fact that $g$ is isotone, $f(t)\leq^\text{\normalfont op}a\leq^\text{\normalfont op}g(s)\leq^\text{\normalfont op}g(t)$.
	\end{proof}

	\begin{corollary}
		$(\nabla,\leq^\text{\normalfont op})$ is a value distributive lattice.
	\end{corollary}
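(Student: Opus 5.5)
By Lemma \ref{lem:deltacd}, $(\nabla,\leq^\text{\normalfont op})$ is already a completely distributive lattice whose top is $\mathbf{0}$ and whose bottom is $\boldsymbol{\infty}$. It therefore remains to verify (VDL1) and (VDL2) of Definition \ref{def:valuedl}. In both cases I will use the description of $\{f: f\lhd^\text{\normalfont op}\mathbf{0}\}$ from Lemma \ref{lem:wbnabla}.

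For (VDL1), the lattice $\nabla$ is nontrivial, since $\mathbf{0}\neq\boldsymbol{\infty}$. Lemma \ref{lem: not trivial} then gives $\boldsymbol{\infty}\lhd^\text{\normalfont op}\mathbf{0}$.

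For (VDL2), take $f,g\lhd^\text{\normalfont op}\mathbf{0}$ and show $f\vee^{\leq^\text{\normalfont op}} g\lhd^\text{\normalfont op}\mathbf{0}$. The supremum in $(\nabla,\leq^\text{\normalfont op})$ is computed pointwise. In the usual order of $[0,+\infty]$ this means $(f\vee g)(t)=\min\{f(t),g(t)\}$.

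If $f=\boldsymbol{\infty}$, then $f\vee g=g$ and there is nothing to prove; the same holds if $g=\boldsymbol{\infty}$. Otherwise, Lemma \ref{lem:wbnabla} provides the following:
\begin{itemize}
\item numbers $s_f,s_g>0$ with $f(t)=\infty$ on $(0,s_f)$ and $g(t)=\infty$ on $(0,s_g)$;
\item numbers $a_f=\inf_t f(t)>0$ and $a_g=\inf_t g(t)>0$, which is condition (2) read in the usual order.
\end{itemize}
Put $h=f\vee g$. Then $h(t)=\infty$ on $(0,\min\{s_f,s_g\})$, so condition (1) holds for $h$. Also $\inf_t h(t)=\min\{a_f,a_g\}>0$, which is condition (2) for $h$.

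The converse direction of Lemma \ref{lem:wbnabla} requires $h\neq\boldsymbol{\infty}$. This holds because $f\neq\boldsymbol{\infty}$ means $f(t)<\infty$ for some $t$, and then $h(t)\leq f(t)<\infty$. Hence $h\lhd^\text{\normalfont op}\mathbf{0}$.

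The only delicate point is the bookkeeping of the reversed order: suprema for $\leq^\text{\normalfont op}$ are pointwise minima, and ``$<^\text{\normalfont op}0$'' means strictly positive. The edge case $f=\boldsymbol{\infty}$ also has to be handled separately, because Lemma \ref{lem:wbnabla} excludes it.
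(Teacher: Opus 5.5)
Your proof is correct and follows essentially the same route as the paper. Complete distributivity comes from Lemma \ref{lem:deltacd}, (VDL1) from nontriviality, and (VDL2) from checking conditions (1) and (2) of Lemma \ref{lem:wbnabla} for the pointwise minimum $f\vee^{\leq^{\text{\normalfont op}}} g$; your separate handling of $f=\boldsymbol{\infty}$ or $g=\boldsymbol{\infty}$, and your check that $f\vee^{\leq^{\text{\normalfont op}}} g\neq\boldsymbol{\infty}$ before using the converse of Lemma \ref{lem:wbnabla}, are points of care the paper's proof passes over silently.
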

	
	\begin{proof}
		We already know by Lemma \ref{lem:deltacd} that $(\nabla,\leq^\text{\normalfont op})$ is completely distributive. 
		
		Moreover, it is obvious that $\boldsymbol{\infty}\lhd^\text{\normalfont op}\mathbf{0}.$
		
		Consider $f,g\in\nabla$ such that $f\lhd^\text{\normalfont op}\mathbf{0}$ and $g\lhd^\text{\normalfont op}\mathbf{0}.$ By the previous lemma, we can find $s_f,s_g\in (0,+\infty)$ such that $f(t)=\infty$ for every $t\in (0,s_f)$ and $g(t)=\infty$ for every $t\in (0,s_g)$. Hence $(f\vee^{\leq^\text{\normalfont op}} g)(t)=\infty$ for every $t\in (0,s_f\wedge s_g)$ so $f\vee^{\leq^\text{\normalfont op}} g$ verifies condition (1) of the above lemma.
		
		Furthermore, $\bigvee^{\leq^\text{\normalfont op}} \{f(t):t\in (0,+\infty)\}<^\text{\normalfont op}0$ and $\bigvee^{\leq^\text{\normalfont op}} \{g(t):t\in (0,+\infty)\}<^\text{\normalfont op}0$ which obviously implies $\bigvee^{\leq^\text{\normalfont op}} \{(f\vee^{\leq^\text{\normalfont op}} g)(t):t\in (0,+\infty)\}<^\text{\normalfont op}0$. By the preceding lemma, $f\vee^{\leq^\text{\normalfont op}} g\lhd^\text{\normalfont op}\mathbf{0}$. Consequently, $(\nabla,\leq^\text{\normalfont op})$ is a value distributive lattice.
	\end{proof}
	
	\begin{proposition}\label{prop:civalueq}
		Consider the binary operation $\oplus:\nabla\times\nabla\to\nabla$ given by
		\[(f\oplus g)(t):=\bigvee_{r+s\leq t}^{\leq^\text{\normalfont op}}(f(r)+g(s))=\bigvee_{r+s=t}^{\leq^\text{\normalfont op}}(f(r)+g(s))\]
		for all $t>0.$ 
		Then $(\nabla,\leq^\text{\normalfont op},\oplus)$ and $(\nabla_L,\leq^\text{\normalfont op},\oplus)$ are  CI-value quantales.
	\end{proposition}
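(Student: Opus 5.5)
The plan is to work throughout in the usual order of $[0,+\infty]$, where everything becomes transparent. There, $f\in\nabla$ means $f$ is non-increasing, joins in $\leq^{\text{op}}$ are pointwise infima, meets are pointwise suprema, and the top element is $\mathbf{0}$. The operation becomes the infimal convolution
\[
(f\oplus g)(t)=\inf_{r+s=t}\bigl(f(r)+g(s)\bigr),
\]
with $r,s>0$.

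\emph{Step 1: well-definedness.} I first check that $f\oplus g$ is non-increasing. If $t<t'$ and $r+s=t$, then $r+(s+t'-t)=t'$ and $g(s+t'-t)\le g(s)$, so $(f\oplus g)(t')\le (f\oplus g)(t)$. The same shift argument shows that the infimum over $r+s\le t$ equals the infimum over $r+s=t$, which gives the two expressions in the statement.

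\emph{Step 2: algebraic laws.}
\begin{itemize}
\item Commutativity is immediate from the symmetry of $r+s=t$.
\item Associativity follows because both $((f\oplus g)\oplus h)(t)$ and $(f\oplus(g\oplus h))(t)$ equal $\inf\{f(r)+g(s)+h(u): r+s+u=t\}$. This uses that addition on $[0,+\infty]$ commutes with infima in each variable: $a+\inf_i b_i=\inf_i(a+b_i)$.
\item Distributivity (q2)/(q3) over arbitrary $\leq^{\text{op}}$-joins, i.e.\ pointwise infima, is the same interchange of infima:
\[
\Bigl(f\oplus \inf_i g_i\Bigr)(t)=\inf_{r+s=t}\inf_i\bigl(f(r)+g_i(s)\bigr)=\inf_i (f\oplus g_i)(t).
\]
By commutativity the other side follows. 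The empty join $\boldsymbol{\infty}$ is absorbing, as it should be.
\end{itemize}
The value-lattice part for $\nabla$ is the Corollary above, so once integrality is settled, $\nabla$ is a CI-value quantale.

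\emph{Step 3: the unit, which is the main obstacle.} I need $f\oplus\mathbf{0}=f$. Since $s>0$ forces $r<t$, we get
\[
(f\oplus\mathbf{0})(t)=\inf_{0<r<t} f(r),
\]
which is the left regularization $\widetilde f(t)$. For $f\in\nabla_L$ this is exactly $f(t)$ by definition of $\nabla_L$, so $\mathbf{0}$ is a unit equal to the top and $\nabla_L$ is integral. For $\nabla$ itself this step looks genuinely problematic. Take $f=\mathbf{1}_{(0,1]}\cdot\infty$ (that is, $f=\infty$ on $(0,1]$ and $0$ after); then $f\oplus\mathbf{0}\ne f$ at $t=1$. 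So I would first try to see whether the intended operation allows $r,s\ge 0$, or whether the claim for $\nabla$ must be weakened to ``commutative quantale with $f\oplus\mathbf{0}=\widetilde f$''. In the write-up I would carry out the integral/unital verification rigorously only for $\nabla_L$, and flag the issue for $\nabla$.

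\emph{Step 4: $\nabla_L$ is a value quantale.}
\begin{itemize}
\item \emph{Closure under $\oplus$.} For $f,g\in\nabla_L$, left-continuity of $f\oplus g$ comes from
\[
\inf_{t'<t}(f\oplus g)(t')=\inf_{r'+s'<t}\bigl(f(r')+g(s')\bigr).
\]
Given $r+s=t$, approximate $f(r)+g(s)$ by $f(r-\varepsilon)+g(s-\varepsilon)$ using left-continuity of $f$ and $g$.
\item \emph{Complete distributivity.} Joins in $\nabla_L$ are pointwise infima by the preceding Proposition. Meets are pointwise suprema, since a supremum of non-increasing left-continuous (hence lower semicontinuous) functions is again such a function. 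So $\nabla_L$ is a complete sublattice of $\nabla$ and inherits complete distributivity from Lemma~\ref{lem:deltacd}.
\item \emph{The (VDL) conditions.} I would redo Lemma~\ref{lem:wbnabla} inside $\nabla_L$. The only change is to replace the test functions $g_n$ by left-continuous ones, using $f(t)+1$ on $(0,1/n]$, or simply $+\infty$ there. The constants $h_n$ already lie in $\nabla_L$, and the converse direction works verbatim. Then (VDL1) and (VDL2) follow exactly as in the Corollary.
\end{itemize}
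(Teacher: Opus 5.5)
Your Steps 1--2 match the paper, and your diagnosis in Step 3 is correct. The error there is in the paper's own proof: it asserts $\bigvee^{\leq^{\text{\normalfont op}}}_{r+s=t}f(r)=f(t)$ ``since $f$ is isotone''. Isotonicity only gives $\inf_{0<r<t}f(r)\geq f(t)$, with equality exactly at points of left-continuity. You should state the conclusion outright rather than hedge. With $\oplus$ as defined ($r,s\in(0,+\infty)$), $\nabla$ has no unit at all, because $(f\oplus e)(t)\geq\inf_{0<r<t}f(r)$ for every $e\in\nabla$. So $(\nabla,\leq^{\text{\normalfont op}},\oplus)$ is a commutative value quantale without unit, and the CI claim holds only for $\nabla_L$. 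However, your witness is wrong. The function equal to $\infty$ on $(0,1]$ and $0$ on $(1,+\infty)$ is left-continuous, since $\inf_{0<r<1}f(r)=\infty=f(1)$, so $f\oplus\mathbf{0}=f$. Take instead $f=\infty$ on $(0,1)$ and $f=0$ on $[1,+\infty)$. Then $(f\oplus e)(1)=\infty\neq 0=f(1)$ for every $e\in\nabla$.

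The second gap is complete distributivity of $\nabla_L$ in Step 4. Non-increasing left-continuous functions are upper, not lower, semicontinuous. Pointwise suprema, which are the $\leq^{\text{\normalfont op}}$-meets of $\nabla$, can leave $\nabla_L$. For instance, the functions equal to $1$ on $(0,1-\frac1n]$ and $0$ afterwards lie in $\nabla_L$, but their supremum ($1$ on $(0,1)$, $0$ on $[1,+\infty)$) does not. So $\nabla_L$ is closed under joins of $\nabla$ but not under its meets; its own meets are left regularizations of pointwise suprema. Hence complete distributivity is not inherited, and the paper's ``complete sublattice'' claim has the same defect.

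One fix is to verify Raney's criterion directly inside $\nabla_L$. For $a>0$ and $f(a)<c<\infty$, let $g_{a,c}$ be $\infty$ on $(0,a]$ and $c$ on $(a,+\infty)$. Then $g_{a,c}\in\nabla_L$ and $g_{a,c}\lhd^{\text{\normalfont op}}f$ in $\nabla_L$. Indeed, suppose $S\subseteq\nabla_L$ satisfies $f\leq^{\text{\normalfont op}}\bigvee S$, i.e.\ $\inf_{h\in S}h\leq f$ pointwise. Then some $h\in S$ has $h(a)<c$, so $h\leq g_{a,c}$ pointwise, i.e.\ $g_{a,c}\leq^{\text{\normalfont op}}h$. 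The pointwise infimum of all such $g_{a,c}$ at $t$ is $\inf_{0<a<t}f(a)=f(t)$ by left-continuity. Thus $f$ is the join of the elements well below it. Your closure-under-$\oplus$ and (VDL) arguments for $\nabla_L$ are fine, and the paper omits the latter entirely.
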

	
	\begin{proof}
		
		It is straightforward to check that $f\oplus g\in\nabla$ for every $f,g\in\nabla.$

		Given $t>0,$ we prove that
		$$\bigvee_{r+s=t}^{\leq^\text{\normalfont op}}f(r)+g(s)=\bigvee_{r+s\leq t}^{\leq^\text{\normalfont op}}f(r)+g(s).$$

		Notice that $\{f(r)+ g(s): r+s=t\}\subseteq \{f(r)+g(s): r+s\leq t\}$, so
		\[\bigvee_{r+s=t}^{\leq^\text{\normalfont op}}(f(r)+g(s))\leq\bigvee_{r+s\leq t}^{\leq^\text{\normalfont op}}(f(r)+ g(s))=(f\oplus g)(t).\]
		On the other hand, if $r+s\leq t$, then $s\leq t-r$. By isotonicity of $g$, $g(s)\leq^\text{\normalfont op} g(t-r)$ so $$f(r)+g(s)\leq^\text{\normalfont op} f(r)+g(t-r)\leq^\text{\normalfont op}\bigvee_{r\leq t}^{\leq^\text{\normalfont op}}(f(r)+ g(t-r))=\bigvee_{r+s=t}^{\leq^\text{\normalfont op}}(f(r)+g(s)).$$ Hence,
		\[(f\oplus g)(t)=\bigvee_{r+s\leq t}^{\leq^\text{\normalfont op}}(f(r)+g(s))\leq^\text{\normalfont op} \bigvee_{r+s=t}^{\leq^\text{\normalfont op}}(f(r)+g(s)),\]
		proving the desired equality.
		
		We next check that $(\nabla,\oplus)$ is a commutative monoid.
		
		First, commutativity is clear from the commutativity of the sum. Furthermore, given $f\in\nabla$,
		\[(f\oplus\mathbf{0})(t)=\bigvee_{r+s=t}^{\leq^\text{\normalfont op}}f(r)+\mathbf{0}(s)=\bigvee_{r+s=t}^{\leq^\text{\normalfont op}}f(r)=f(t),\]
		where the last inequality holds since $f$ is isotone. So $\mathbf{0}$ is the neutral element for $\oplus.$

		Finally, to prove the associativity property, one has that
		\begin{align*}
			((f\oplus g)\oplus h)(t)&=\bigvee_{r+s=t}^{\leq^\text{\normalfont op}}(f\oplus g)(r)+h(s)=\bigvee_{r+s=t}^{\leq^\text{\normalfont op}}\left(\bigvee_{u+v=r}f(u)+g(v)\right)+h(s)=\\			&=\bigvee_{\substack{r+s=t \\ u+v=r}}^{\leq^\text{\normalfont op}}f(u)+g(v)+ h(s)=\bigvee_{u+v+s=t}^{\leq^\text{\normalfont op}}f(u)+ g(v)+ h(s).
		\end{align*}
		where the last equalities hold by the continuity of the sum. Since the final expression does not depend on the order of the elements, we can assure that $\oplus$ is associative.

		To prove the distributivity property of $\oplus$ with respect to suprema, we only need to show it for one side because of the commutativity of the operation. For any $\{g_i\}_{i\in I}\subseteq\nabla$,
		\begin{align*}
			\left(f\oplus \bigvee_{i\in I}^{\leq^\text{\normalfont op}}g_i\right)(t)&=\bigvee_{r+s=t}^{\leq^\text{\normalfont op}}\left(f(r)+\bigvee_{i\in I}g_i(s)\right)=\bigvee_{r+s=t}^{\leq^\text{\normalfont op}}\bigvee_{i\in I}^{\leq^\text{\normalfont op}}f(r)+ g_i(s)\\&=\bigvee_{i\in I}^{\leq^\text{\normalfont op}}\bigvee_{r+s=t}^{\leq^\text{\normalfont op}}f(r)+ g_i(s)=\left(\bigvee_{i\in I}(f\oplus g_i)\right)(t).
		\end{align*}
		
		Finally, since $f\leq^\text{\normalfont op}\mathbf{0}$ for all $f\in\nabla$ then $\mathbf{0}=\top$ so the quantale $(\nabla,\leq^\text{\normalfont op},\oplus)$ is integral.
		
		Additionally, a routine check shows that $f\oplus g\in\nabla_L$ for every $f,g\in\nabla_L.$ Since $(\nabla_L,\leq^\text{\normalfont op})$ is a sublattice of $(\nabla,\leq^\text{\normalfont op})$ with the same top and bottom, then $(\nabla_L,\leq^\text{\normalfont op},\oplus)$ is also a CI-quantale.
	\end{proof}

	We arrive at the main result of the paper that proves that the category of quasi-pseudometric modular spaces with nonexpansive maps is isomorphic to the category of $\nabla$-categories.

	\begin{theorem}\label{thm:iso}
		$\nabla$-$\mathsf{Cat}$ is isomorphic to  $\mathsf{QPMod}_n$.
	\end{theorem}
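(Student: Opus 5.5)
We will construct a pair of mutually inverse functors $\Phi:\mathsf{QPMod}_n\to\nabla$-$\mathsf{Cat}$ and $\Psi:\nabla$-$\mathsf{Cat}\to\mathsf{QPMod}_n$, both acting as the identity on underlying sets and on maps. Given a quasi-pseudometric modular space $(X,w)$, put $\Phi(X,w)=(X,a_w)$, where $a_w(x,y)\in\nabla$ is the function $t\mapsto w(t,x,y)$. This is a legitimate element of $\nabla$ because, by Proposition \ref{prop:isotone}, $w(\_,x,y)$ is non-increasing, that is, isotone as a map $(0,+\infty)\to[0,+\infty]^{\text{\normalfont op}}$. Conversely, given a $\nabla$-category $(X,a)$, put $\Psi(X,a)=(X,w_a)$ with $w_a(t,x,y):=a(x,y)(t)$. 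On objects the two assignments are clearly inverse to each other, since both amount to currying and uncurrying the variable $t$.

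We next translate the axioms. (QC1) says $\mathbf{0}=\top\leq^{\text{\normalfont op}} a(x,x)$, i.e.\ $a(x,x)(t)\leq 0$ for all $t$, which is exactly (M1). For (QC2), $a(x,z)\oplus a(z,y)\leq^{\text{\normalfont op}}a(x,y)$ means, pointwise and using the description of $\oplus$ in Proposition \ref{prop:civalueq},
\[
a(x,y)(t)\leq \bigwedge_{r+s=t}\bigl(a(x,z)(r)+a(z,y)(s)\bigr)\quad\text{for all } t>0,
\]
where the infimum is taken in the usual order of $[0,+\infty]$, because a supremum for $\leq^{\text{\normalfont op}}$ is an infimum for $\leq$. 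This inequality is equivalent to requiring $w(r+s,x,y)\leq w(r,x,z)+w(s,z,y)$ for all $r,s>0$, which is (M2). Hence $a_w$ is a $\nabla$-category whenever $w$ is a quasi-pseudometric modular, and $w_a$ is a quasi-pseudometric modular whenever $(X,a)$ is a $\nabla$-category.

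For morphisms, a $\nabla$-functor $f:(X,a)\to(Y,b)$ satisfies $a(x,y)\leq^{\text{\normalfont op}} b(f(x),f(y))$, i.e.\ $w_b(t,f(x),f(y))\leq w_a(t,x,y)$ for all $t>0$. This is precisely nonexpansiveness (the Lipschitz condition with $k=1$). So $\Phi$ and $\Psi$ send morphisms to morphisms. Since they act as the identity on maps, they trivially preserve identities and composition, and they are mutually inverse. This proves the isomorphism of categories.

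The only step needing care is the (QC2)$\Leftrightarrow$(M2) translation. The points to check are the direction of the order reversal and the fact that $r,s$ range over $(0,+\infty)$ with $r+s=t$, so every pair $r,s>0$ arises for $t=r+s$. It is also worth noting that the version of $\oplus$ with $r+s\leq t$ gives the same condition. Everything else is bookkeeping.
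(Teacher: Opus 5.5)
Your proof is correct and is essentially the paper's argument. The paper uses the same two functors (currying/uncurrying in $t$, identity on maps), matches (QC1) to (M1), matches (QC2) to (M2) through the pointwise description of $\oplus$, and matches $\nabla$-functors to nonexpansive maps; you are slightly more thorough, since the paper checks only the direction $w\mapsto a_w$ explicitly and calls the converse clear. Your argument also uses (QC1) only in the form $\top=\mathbf{0}\leq^{\text{\normalfont op}} a(x,x)$ and never needs a unit law for $\oplus$. That matters, because $(f\oplus\mathbf{0})(t)=\inf_{0<r<t}f(r)$ is the left regularization of $f$, so $\mathbf{0}$ is a unit only on $\nabla_L$, not on all of $\nabla$.
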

	
	\begin{proof}
		Let us define $\mathscr{E}_\nabla:\nabla\text{-}\mathsf{Cat}\to \mathsf{QPMod}_n$ leaving morphisms unchanged and $\mathscr{E}_\nabla ((X,a))=(X,w_a)$ for every $\nabla$-category $(X,a)$, where $w_a(t,x,y)=a(x,y)(t)$ for all $x,y\in X$, $t>0.$ It is clear that $\mathscr{E}_\nabla$ is a functor.
		
		On the other hand, consider $\mathscr{E}_{\mathsf{Mod}}:\mathsf{QPMod}_n\to\nabla\text{-}\mathsf{Cat}$ leaving morphisms unchanged and $\mathscr{E}_{\mathsf{Mod}} ((X,w))=(X,a_w)$ for every quasi-pseudometric modular space $(X,w)$, where $a_w(x,y)(t)=w(t,x,y)$ for all $x,y\in X$, $t>0.$ It is straightforward to check that $(X,a_w)$ is a $\nabla$-category (notice that, due to Proposition \ref{prop:isotone}, $a_w(x,y)$ is isotone for every $x,y\in X$).
		
		To check (QC1), notice that for any $x\in X$, $a_w(x,x)(t)=w(t,x,x)=0$ by  (M1). Hence, $a_w(x,x)=\mathbf{0}$, where $\mathbf{0}$ is the unit element of $\nabla$.
		
		We next prove (QC2). Take any $x,y,z\in X$. Then
		\begin{align*}
			(a_w(x,z)\oplus a_w(z,y))(t)&=\bigvee_{r+s=t}^{\leq^\text{\normalfont op}}(a_w(x,z)(r)+a_w(z,y)(s))\\&=\bigvee_{r+s=t}^{\leq^\text{\normalfont op}}(w(r,x,z)+w(s,z,y))\leq^\text{\normalfont op}\\&\leq^\text{\normalfont op}\bigvee_{r+s=t}^{\leq^\text{\normalfont op}}w(r+s,x,y)=w(t,x,y)=a_w(x,y)(t),
		\end{align*}
		for all $t>0.$
		Consequently, $a_w(x,z)\oplus a_w(z,y)\leq^\text{\normalfont op} a_w(x,y)$.
		
		Moreover, it is obvious that a nonexpansive function between quasi-pseudometric modular spaces is a $\nabla$-functor between their corresponding $\nabla$-categories.
		
		Finally, it easily follows that $\mathscr{E}_{\mathsf{Mod}}\circ \mathscr{E}_\nabla=\mathscr{I}_{\nabla\text{-}\mathsf{Cat}}$ and $ \mathscr{E}_\nabla\circ \mathscr{E}_{\mathsf{Mod}}=\mathscr{I}_{\mathsf{QPMod}_n}$
	\end{proof}

	If we change the quantale $\nabla$ by $\nabla_L$, we obtain the category of left-continuous quasi-pseudometric modular spaces.
	
	\begin{theorem}\label{prop: modular LQcategory}
		$\nabla_L$-$\mathsf{Cat}$ is isomorphic to  $\mathsf{LQPMod}_n$.
	\end{theorem}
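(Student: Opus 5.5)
The plan is to restrict the two functors $\mathscr{E}_\nabla$ and $\mathscr{E}_{\mathsf{Mod}}$ built in the proof of Theorem \ref{thm:iso}, and to show that each restriction lands in the right subcategory. Since $\mathsf{LQPMod}_n$ is a full subcategory of $\mathsf{QPMod}_n$, morphisms need no separate treatment. A map $f:(X,a)\to(Y,b)$ between $\nabla_L$-categories is a $\nabla_L$-functor exactly when $a(x,y)\leq^{\text{op}} b(f(x),f(y))$. The order on $\nabla_L$ is the restriction of the order on $\nabla$, so this is the same condition as being a $\nabla$-functor, i.e. being nonexpansive. Hence $\nabla_L$-$\mathsf{Cat}$ is a full subcategory of $\nabla$-$\mathsf{Cat}$ with the same composition and identities. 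The structure needed to define $\nabla_L$-categories comes from Proposition \ref{prop:civalueq}: $\nabla_L$ is a CI-quantale with the same top $\mathbf{0}$ and the same operation $\oplus$.

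\textbf{Objects.} I will show that, for a quasi-pseudometric modular $w$, $w$ is left-continuous if and only if $a_w(x,y)\in\nabla_L$ for all $x,y$. By Proposition \ref{prop:isotone}, $w(\_,x,y)$ is non-increasing. The supremum with respect to $\leq^{\text{op}}$ is the usual infimum in $[0,+\infty]$. So the condition $a_w(x,y)(t)=\bigvee^{\leq^{\text{op}}}_{0<s<t}a_w(x,y)(s)$ reads
$$w(t,x,y)=\inf_{0<s<t}w(s,x,y)=\lim_{s\to t^-}w(s,x,y).$$
For a monotone function, the left limit at $t$ exists and equals that infimum. Hence this identity is precisely left-continuity of $w(\_,x,y)$ at $t$, where left-continuity into $[0,+\infty]$ is taken in the extended sense.

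It follows that $\mathscr{E}_{\mathsf{Mod}}$ sends left-continuous spaces to $\nabla_L$-categories. Axioms (QC1) and (QC2) hold by the computation in Theorem \ref{thm:iso}, since $\oplus$ and the unit coincide in the two quantales. Conversely, $\mathscr{E}_\nabla$ sends $\nabla_L$-categories to left-continuous quasi-pseudometric modular spaces. The two composites are identities because they are restrictions of the mutually inverse functors of Theorem \ref{thm:iso}.

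\textbf{Main obstacle.} The only delicate point is the equivalence between membership in $\nabla_L$ and left-continuity when $w$ may take the value $+\infty$. I would handle it by the monotone-limit argument above in the extended reals. The argument does not use closure of $\nabla_L$ under $\oplus$ beyond what Proposition \ref{prop:civalueq} already asserts.
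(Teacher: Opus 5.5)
Your proposal is correct and follows the paper's proof. The paper restricts $\mathscr{E}_\nabla$ (and implicitly $\mathscr{E}_{\mathsf{Mod}}$) from Theorem \ref{thm:iso} and observes that membership of $a(x,y)$ in $\nabla_L$ is exactly left-continuity of $w_a(\_,x,y)$. This works because, for a non-increasing function, the $\leq^{\text{op}}$-supremum over $0<s<t$ is the ordinary infimum, i.e.\ the left limit. Your write-up is actually more complete than the paper's: it also checks the converse direction and that $\nabla_L$-functors are precisely the nonexpansive maps, which the paper leaves as ``similar to the previous one''.
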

	
	\begin{proof}
		Let us define $\mathscr{E}_{\nabla_L}$ as the restriction of the functor $\mathscr{E}_{\nabla}$ to the category $\nabla_L\text{-}\mathsf{Cat}.$ Notice that in this case $\mathscr{E}_{\nabla_L}((X,a))=(X,w_a)$ is a left-continuous quasi-pseudometric modular space. In fact, $$w_a(t,x,y)=a(x,y)(t)=\bigvee^{\leq^\text{\normalfont op}}_{0<s<t} a(x,y)(s)=w_a(s,x,y)$$ for every $x,y\in X$ and $t>0.$ Consequently, $\mathscr{E}_{\nabla_L}:\nabla_L\text{-}\mathsf{Cat}\to \mathsf{LQPMod}_n$ is well-defined.
		
		The rest of the proof is similar to the previous one.
	\end{proof}
	
	\begin{theorem}
		
		The following diagram commutes:
		
		\begin{center}
			\begin{tikzpicture}
				\path (0,4) node (x) {\begin{minipage}{2cm}\centering $\nabla$-$\mathsf{Cat}$\end{minipage}}
				(5,4) node (x) {\begin{minipage}{2cm}\centering $\mathsf{QPMod}_n$\end{minipage}}
				
				(0,2) node (x) {\begin{minipage}{2cm}\centering $\nabla_L$-$\mathsf{Cat}$\end{minipage}}
				(5,2) node (y) {\begin{minipage}{2cm}\centering  $\mathsf{LQPMod}_n$\end{minipage}}           ;
				\draw[->] (1.,4) --node[above]{$\mathscr{E}_\nabla$} (4.1,4);
				\draw[->] (1.,2) --node[above]{$\mathscr{E}_{\nabla_L}$} (4.1,2);
				\draw[->] (0,3.7)--node[right]{$\mathscr{U}$} (0,2.3);
				\draw[->] (5,3.7)--node[right]{$\mathscr{L}$} (5,2.3);
			\end{tikzpicture}
		\end{center}
		where $\mathscr{U}:\nabla$-$\mathsf{Cat}\to\nabla_L$-$\mathsf{Cat}$ is the functor given by $\mathscr{U}((X,a))=(X,\widetilde{a})$ and leaving morphisms unchanged, where $\widetilde{a}$ is given by
		$$\widetilde{a}(x,y)(t)=\bigwedge_{0<s<t} a(x,y)(s).$$
		
	\end{theorem}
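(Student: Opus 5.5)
We have to show $\mathscr{E}_{\nabla_L}\circ\mathscr{U}=\mathscr{L}\circ\mathscr{E}_\nabla$ as functors $\nabla$-$\mathsf{Cat}\to\mathsf{LQPMod}_n$. The argument splits into well-definedness of the functors and a check on objects and on morphisms.

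\textbf{Step 1: $\mathscr{U}$ is well defined on objects.} First we check that $\widetilde{a}(x,y)$ lies in $\nabla_L$. The function $\widetilde{a}(x,y)(\cdot)$ is exactly the left regularization of $w_a(\cdot,x,y)$, where $w_a$ is the quasi-pseudometric modular supplied by Theorem \ref{thm:iso}. By \cite[Proposition 1.2.5]{Chis15}, already invoked in the reflector proposition, $\widetilde{w_a}$ is a left-continuous quasi-pseudometric modular. So $\widetilde{a}(x,y)$ is non-increasing, i.e.\ isotone into $[0,+\infty]^{\text{op}}$. Left-continuity of a non-increasing function gives $\widetilde a(x,y)(t)=\bigwedge_{0<s<t}\widetilde a(x,y)(s)$, which is the defining condition of $\nabla_L$ written in $\leq$. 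Finally, (QC1) and (QC2) for $\widetilde a$ are just (M1) and (M2) for $\widetilde{w_a}$, via the computation in the proof of Theorem \ref{thm:iso} run backwards. Thus $(X,\widetilde a)$ is a $\nabla_L$-category.

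\textbf{Step 2: $\mathscr{U}$ is well defined on morphisms.} This is the only step needing care, since Remark \ref{rem:functor} warns that left regularization interacts subtly with nonexpansive maps. Here the direction is favorable. If $a(x,y)\leq^{\text{op}} b(f(x),f(y))$, i.e.\ $b(f(x),f(y))(s)\leq a(x,y)(s)$ for all $s$, then taking infima over $0<s<t$ gives
\[
\widetilde b(f(x),f(y))(t)\leq \widetilde a(x,y)(t).
\]
So $f$ is still a $\nabla_L$-functor, and functoriality of $\mathscr{U}$ is immediate because it leaves maps unchanged. In the same way, $\mathscr{L}$ restricts to a functor $\mathsf{QPMod}_n\to\mathsf{LQPMod}_n$, as noted in Remark \ref{rem:functor}.

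\textbf{Step 3: commutativity.} Let $(X,a)$ be a $\nabla$-category. On one side,
\[
\mathscr{E}_{\nabla_L}(\mathscr{U}(X,a))=(X,w_{\widetilde a}),\qquad w_{\widetilde a}(t,x,y)=\widetilde a(x,y)(t)=\bigwedge_{0<s<t}a(x,y)(s).
\]
On the other side,
\[
\mathscr{L}(\mathscr{E}_\nabla(X,a))=(X,\widetilde{w_a}),\qquad \widetilde{w_a}(t,x,y)=\bigwedge_{0<s<t}w_a(s,x,y)=\bigwedge_{0<s<t}a(x,y)(s).
\]
These agree pointwise, so the two composites coincide on objects. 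All four functors leave morphisms unchanged, so both composites send $f$ to $f$. Hence the diagram commutes.

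The main obstacle is Step 1, namely showing that the infimum-regularized function genuinely belongs to $\nabla_L$ and satisfies the triangle axiom (QC2). Transporting the problem through $\mathscr{E}_\nabla$ and citing Chistyakov's result on left regularizations avoids redoing that $\varepsilon$-argument.
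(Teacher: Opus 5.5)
Your proof is correct and follows the paper's argument: the core is the same pointwise identity $w_{\widetilde a}(t,x,y)=\bigwedge_{0<s<t}a(x,y)(s)=\widetilde{w_a}(t,x,y)$, together with the observation that all four functors leave morphisms unchanged. Your Steps 1 and 2 only spell out what the paper calls straightforward, namely that $\mathscr{U}$ is well defined: $\widetilde a$ lands in $\nabla_L$ by Chistyakov's left-regularization result, and $\nabla$-functors remain $\nabla_L$-functors after taking infima.
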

	
	\begin{proof}
		It is straightforward to check that $\mathscr{U}$ is a functor. By Remark \ref{rem:functor}, $\mathscr{L}$ is a functor. 
		
		Moreover, 
		
		$$w_{\widetilde{a}}(t,x,y)=\widetilde{a}(x,y)(t)=\bigwedge_{0<s<t} a(x,y)(s)=\bigwedge_{0<s<t}w_a(s,x,y)=\widetilde{w}_a(t,x,y)$$
		for all $x,y\in X$ and all $t>0.$ Hence
		
		$$(\mathscr{E}_{\nabla_L}\circ\mathscr{U})(X,a)=(X,w_{\widetilde{a}})=(X,\widetilde{w_a})=(\mathscr{L}\circ\mathscr{E}_\nabla)(X,a)$$
		which proves the commutativity of the diagram.
	\end{proof}
	
	We finish the paper by showing that the isomorphism between the categories  $\nabla$-$\mathsf{Cat}$ and $\mathsf{QPMod}_n$ also behaves well with respect to topology.
	
	\begin{theorem}\label{thm:topology}
		Let $(X,w)$ be a quasi-pseudometric modular space. Then $\mathcal{T}(w
		)=\mathcal{T}(a_w).$
	\end{theorem}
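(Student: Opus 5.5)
We compare neighborhood bases at each point $x\in X$. In $\mathcal{T}(w)$ a base at $x$ is $\{W_{t,\varepsilon}(x):t,\varepsilon>0\}$. In $\mathcal{T}(a_w)$, Proposition \ref{prop: base open balls quantale} says the balls $B(y,f)$ with $f\lhd^{\text{\normalfont op}}\mathbf{0}$ form a base. The plan is to show two things: every ball $B(x,f)$ contains some $W_{t,\varepsilon}(x)$, and every $W_{t,\varepsilon}(x)$ contains some ball $B(x,f)$. We must also handle the fact that a ball centered at another point $y\ni x$ is a neighborhood of $x$.

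\emph{Step 1: characterize $f\lhd^{\text{\normalfont op}} g$ for general $g$.} Lemma \ref{lem:wbnabla} only treats $g=\mathbf{0}$, but membership in a ball requires $f\lhd^{\text{\normalfont op}} a_w(x,y)$. I expect the following partial characterization to suffice. If $f\lhd \mathbf 0$, say $f=\infty$ on $(0,s)$ and $f\ge a>0$ on $[s,\infty)$, and if $g(s')<a'$ for some $s'<s$ and some $0<a'<a$, then $f\lhd g$. To see this, take $\bigvee\mathcal F\ge g$, meaning $\inf_{h\in\mathcal F} h(s')\le g(s')<a'$. Then some $h\in\mathcal F$ has $h(s')<a'$, so by isotonicity $h(t)\le a'\le f(t)$ for $t\ge s'$, while $f=\infty$ below $s$. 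This gives $f\le^{\text{\normalfont op}} h$.

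Conversely, if $f\lhd g$ with $f$ as above, I will derive that $g(u)<a$ for every $u$ with $u<s$ close to $s$. The argument uses the family $h_{u,\delta}$ that equals $\infty$ on $(0,u)$ and $g(u)+\delta$ afterwards, together with $g$ itself. This direction needs care and is the main obstacle. In fact only the weaker consequence "$f\lhd g\Rightarrow f\le g$", i.e.\ $g(t)\le f(t)$, is needed, and that follows from Proposition \ref{lema: well below}(1).

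\emph{Step 2: $\mathcal{T}(a_w)\subseteq\mathcal{T}(w)$.} Let $x\in B(y,f)$ with $f\lhd\mathbf 0$. Since $f\lhd^{\text{\normalfont op}} a_w(y,x)$ and the well-below relation is approximating, interpolate: using complete distributivity, choose $f\lhd f'\lhd a_w(y,x)$. Then $f'$ satisfies (1) and (2) of Lemma \ref{lem:wbnabla}, with $f'=\infty$ on $(0,s')$ and bound $a'$. The idea is that $f\lhd f'$ leaves "room" of the form $s_0=s'-s''>0$ in the parameter and $\eta=a''-a'>0$ in the value. For $z\in W_{s_0,\eta}(x)$, (M2) gives $w(t,y,z)\le w(t-s_0,y,x)+w(s_0,x,z)$. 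Combining this with the Step 1 criterion shows $f\lhd a_w(y,z)$, so $W_{s_0,\eta}(x)\subseteq B(y,f)$.

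\emph{Step 3: $\mathcal{T}(w)\subseteq\mathcal{T}(a_w)$.} Given $t,\varepsilon>0$, define $f$ to be $\infty$ on $(0,t/2)$ and $\varepsilon/2$ on $[t/2,\infty)$. By Lemma \ref{lem:wbnabla}, $f\lhd\mathbf 0$. If $y\in B(x,f)$, then $a_w(x,y)\ge^{\text{\normalfont op}} f$, so $w(t,x,y)\le\varepsilon/2<\varepsilon$ and $B(x,f)\subseteq W_{t,\varepsilon}(x)$. Also $x\in B(x,f)$ since $a_w(x,x)=\mathbf 0$. Hence $W_{t,\varepsilon}(x)$ is a $\mathcal{T}(a_w)$-neighborhood of $x$, which completes the plan.
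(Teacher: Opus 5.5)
Your Step 3 is correct and coincides with the paper's proof of $\mathcal{T}(w)\subseteq\mathcal{T}(a_w)$; the paper uses the radius $f_{t,\varepsilon}$, equal to $\infty$ on $(0,t)$ and to $\varepsilon$ on $[t,+\infty)$. The gap is Step 2, at exactly the point you flag as difficult. For $\boldsymbol{\infty}\neq f\lhd^{\text{\normalfont op}}\mathbf{0}$ put $s_f=\sup\{u:f(u)=\infty\}$ and $a_f=\inf f$. Then $f\lhd^{\text{\normalfont op}}g$ if and only if $g(s_f)<a_f$; for ``only if'', test against $\{f_{t,\varepsilon}:g(t)<\varepsilon\}$, whose join is $g$. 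So the converse you sketch in Step 1 is false: if $g=\infty$ on $(0,s)$ and $g=0$ on $[s,+\infty)$, then $f_{s,a}\lhd^{\text{\normalfont op}}g$ although $g(u)=\infty$ for all $u<s$. Interpolation also does not give the parameter slack $s_0>0$. Indeed, $f\lhd^{\text{\normalfont op}}f'$ gives $f'\leq f$ pointwise, so $f'$ is infinite on a \emph{shorter} initial interval, and $s_{f'}=s_f$ is allowed. For $f=f_{1,1}$ and the $g$ above with $s=1$, every interpolant $f\lhd^{\text{\normalfont op}}f'\lhd^{\text{\normalfont op}}g$ has $s_{f'}=1$. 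Since $f\lhd^{\text{\normalfont op}}a_w(y,x)$ says nothing about $w(u,y,x)$ for $u<s_f$, (M2) only bounds $w(t,y,z)$ for $t>s_f$. But $z\in B(y,f)$ needs $w(s_f,y,z)<a_f$. So Step 2 needs precisely the false converse, not merely $f\leq^{\text{\normalfont op}}g$.

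This cannot be patched, because $\mathcal{T}(a_w)\subseteq\mathcal{T}(w)$ fails when $w$ is not left-continuous. Let $X=\{x,y,z\}$ and define $w$ as follows:
\begin{itemize}
\item $w(t,p,p)=0$ for all $p$;
\item $w(t,y,x)=\infty$ for $t<1$ and $0$ for $t\geq 1$;
\item $w(t,y,z)=\infty$ for $t\leq 1$ and $0$ for $t>1$;
\item $w(t,x,z)=0$ for all $t$;
\item $w\equiv\infty$ on the remaining pairs.
\end{itemize}
(M2) is nontrivial only along $y\to x\to z$, where it holds. Then $B(y,f_{1,1})=\{y,x\}$, while $W_{t,\varepsilon}(x)=\{x,z\}$ for all $t,\varepsilon$. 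So $\{y,x\}$ is open in any topology containing the balls but is not $\mathcal{T}(w)$-open.

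The paper's second half avoids off-centre balls by choosing a ball centred at $x$ inside $O$. That is, it assumes Flagg's fact that centred balls form a local base. This fact rests on $\mathbf{0}$ being the unit of $\oplus$, but in $\nabla$ one has $(f\oplus\mathbf{0})(t)=\inf_{0<r<t}f(r)$. Indeed, in the example every ball centred at $x$ equals $\{x,z\}\not\subseteq\{y,x\}$.

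For left-continuous $w$ your Step 2 does work, provided the slack comes from left continuity rather than from interpolation. If $w(s_f,y,x)<a_f$, pick $0<\sigma<s_f$ with $w(s_f-\sigma,y,x)<a_f$ and set $\eta=a_f-w(s_f-\sigma,y,x)$. Then (M2) gives $W_{\sigma,\eta}(x)\subseteq B(y,f)$; the radius $\boldsymbol{\infty}$ is trivial.
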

	
	\begin{proof}
		Let $G\in\mathcal{T}(w).$ Then given $x\in G$ there exists $t,\varepsilon>0$ such that $x\in W_{t,\varepsilon}(x)\subseteq G.$
		Define $f_{t,\varepsilon}:(0,+\infty)\to [0,+\infty]$ by
		$$f_{t,\varepsilon}(s)=\begin{cases} +\infty&\text{ if }0<s<t\\
			\varepsilon&\text{ if }t\leq s
		\end{cases}.
		$$
		
		By Lemma \ref{lem:wbnabla}, $f_{t,\varepsilon}\lhd^\text{\normalfont op}\mathbf{0}.$ We next show that $x\in B_{a_w}(x,f_{t,\varepsilon})\subseteq W_{t,\varepsilon}(x)\subseteq G.$ If $y\in B_{a_w}(x,f_{t,\varepsilon})$ then $f_{t,\varepsilon}\lhd^\text{\normalfont op}a_w(x,y).$ In particular, $f_{t,\varepsilon}(t)=\varepsilon<^\text{\normalfont op}a_w(x,y)(t)=w(t,x,y)$ so $y\in W_{t,\varepsilon}(x).$

		Conversely, let $O\in\mathcal{T}(a_w)$ and $x\in O.$ Then we can find $g\in\nabla$ with $g\lhd^\text{\normalfont op}\mathbf{0}$ such that $B_{a_w}(x,g)\subseteq G.$ By Lemma \ref{lem:wbnabla} we know that $\varepsilon:=\bigvee^{\leq^\text{\normalfont op}}\{g(t):t\in (0,+\infty)\}<^\text{\normalfont op} 0$ and there exists $s\in (0,+\infty)$ such that $g(t)=\infty$ for every $t\in (0,s).$ Let $0<t_0<s$. We assert that $x\in W_{t_0,\varepsilon}(x)\subseteq B_{a_w}(x,g)\subseteq G.$ In fact, let $y\in W_{t_0,\varepsilon}(x)$, that is, $w(t_0,x,y)<\varepsilon.$ Then $g\leq^\text{\normalfont op} f_{t_0,\varepsilon}\lhd^\text{\normalfont op}w(\_,x,y)=a_w(x,y)$ so $y\in B_{a_w}(x,f).$
	\end{proof}



\end{document}